\newcommand{\vp}{\varepsilon}
\newcommand{\bb}[1]{\mathbb{#1}}
\newcommand{\cl}[1]{\mathcal{#1}}
\newcommand{\ovl}{\overline}
\theoremstyle{plain}
\newtheorem{thm}{Theorem}[section]
\newtheorem{lem}[thm]{Lemma}
\newtheorem*{slem}{Sublemma}
\newtheorem{pro}[thm]{Proposition}
\newtheorem{cor}[thm]{Corollary}
\theoremstyle{definition}
\newtheorem{dfn}[thm]{Definition}
\theoremstyle{remark}
\newtheorem{rem}[thm]{Remark}
\numberwithin{equation}{section}
\def\RR{\bb R}
\def\R{\bb R}
\def\Z{\bb Z}
\def\C{\bb C}
\def\E{\bb E}
\def\EE{\bb E}
\def\N{\bb N}
\def\P{\bb P}
\def\T{\bb T}
\def\hat{\widehat}
\def\nl{\nolimits}
\begin{document}
\def\d{\delta}

 \title{
  Random unitaries,  amenable linear groups and Jordan's theorem}

\author{by\\
Emmanuel Breuillard\footnote{breuilla@uni-muenster.de}\\
WWU M\"unster\\ 
 and\\
Gilles  Pisier\footnote{pisier@math.tamu.edu}\\
Texas A\&M University and UPMC-Paris VI}

\maketitle
\begin{abstract} It is well known that a dense
subgroup $G$ of the complex unitary group $U(d)$ cannot be amenable
as a discrete group when $d>1$. When $d$ is large enough
we give  quantitative
versions of this phenomenon in connection with
certain estimates of random   Fourier series
on the compact group $\bar G$ that is the closure of $G$.
Roughly, we show that if $\bar G$ covers a large enough
part of $U(d)$ in the sense of metric
entropy then $G$ cannot be amenable.
The results are all based on a
version of a classical theorem of Jordan that says 
that if $G$   is finite, or amenable as a discrete group,
then $G$ contains an Abelian subgroup
with index $e^{o(d^2)}$. 
 \end{abstract}
 
 MSC: 43A46, 47A56, 22D10
 
 \def\tr{{\rm tr}}

\vfill\eject
\noindent  Let $G$ be a compact group.
We denote by $m_G$ the normalized Haar measure on    $G$,
and by $\hat G$ a maximal family of 
mutually distinct (up to unitary equivalence)   irreducible unitary representations on $G$.
For any $\pi\in \hat G$ let
$\chi_\pi(x)=\tr (\pi(x))$ denote as usual its character,
so that $\{\chi_\pi\mid \pi\in \hat G\}$ is an orthonormal system
in $L_2(G)$.\\
Let $M_d$ be the space of  matrices of size $d \times d$ with
complex entries.
We use the standard notation $|a|:=\sqrt{a^*a}$, i.e. the unique non-negative self-adjoint matrix whose square is $a^*a$.\\
Let $\mathbb{T}$ be the unit circle $\mathbb{T}=\{z \in \mathbb{C} ; |z|=1\}$ in the complex plane.
Let $U(d)\subset M_d$ denote  the group of all   unitary matrices.

Our investigation  is motivated by the following   from \cite{Pi2}
(see  \S  \ref{pfs} below):
 
 \begin{thm}[Characterization of Subgaussian characters]\label{t4} Let $(G_n)$ be a sequence of compact groups, 
 let $\pi_n \in \hat{G_n}$ be nontrivial   and let  $\chi_{n}=\chi_{\pi_n}$ as well as
 $d_{n}=d_{\pi_n}$. The following are equivalent:
\item[(i)]   There is a constant $C$ such that
$$\forall n\forall a\in M_{d_n}\quad \tr|a| \le C\sup\nolimits_{g\in G_n} |\tr (a\pi_n(g))|.$$
\item[(ii)] There is $C>0$ such that
$$\forall n\quad \int \exp{ (  |\chi_{ n}/C|^2)} dm_{G_n}\le \exp(1) \ (=e).$$
\item[(iii)] There is a constant $C$ such that
$$\forall n\quad d_n\le C \int_{U(d_n)} \sup\nolimits_{g\in G_n} | \tr ({u}  \pi_n(g)) | m_{U(d_n)}(d{u})  .$$
\end{thm}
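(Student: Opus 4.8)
Throughout write $f_a:=\tr\!\big(a\,\pi_n(\cdot)\big)\in C(G_n)$ for $a\in M_{d_n}$, and put $\|a\|_1=\tr|a|$, $\|a\|_2=(\tr\,a^*a)^{1/2}$, $\|a\|_\infty=$ the operator norm. Schur orthogonality gives $\int_{G_n}f_a\,\overline{f_b}\,dm_{G_n}=d_n^{-1}\tr(b^*a)$, so $a\mapsto\sqrt{d_n}\,f_a$ is an isometry of $(M_{d_n},\|\cdot\|_2)$ onto the $\pi_n$-isotypic subspace $\cl E_n\subset L_2(G_n)$, and Fourier inversion reads $a=d_n\int_{G_n}f_a(g)\,\pi_n(g)^*\,dm_{G_n}(g)$; moreover $|f_a(g)|=|\tr(a\pi_n(g))|\le\|a\|_1$, so $\sup_{g\in G_n}|f_a(g)|\le\|a\|_1$ always. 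Let $\|\cdot\|_{\psi_2}$ be the Orlicz (Luxemburg) norm for $\psi_2(t)=e^{t^2}-1$; then (ii) says precisely $\sup_n\|\chi_n\|_{\psi_2}<\infty$. The plan is to run the cycle (i)$\Rightarrow$(iii)$\Rightarrow$(ii)$\Rightarrow$(i).

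\emph{(i)$\Rightarrow$(iii).} This is immediate: for $u\in U(d_n)$ one has $\tr|u|=\tr I=d_n$, so (i) applied to $a=u$ gives $d_n\le C\sup_{g\in G_n}|\tr(u\pi_n(g))|$ \emph{for every} $u$; integrating against $m_{U(d_n)}$ yields (iii) with the same constant. The real content is in the two remaining implications.

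\emph{(ii)$\Rightarrow$(i).} The first step is an elementary moment comparison: for every $b$ with $\|b\|_\infty\le1$ and every $k\ge1$,
\[\int_{G_n}|f_b|^{2k}\,dm_{G_n}\ \le\ \int_{G_n}|\chi_n|^{2k}\,dm_{G_n}.\]
Indeed $\tr(b\pi_n(g))^{k}=\tr\!\big((b\pi_n(g))^{\otimes k}\big)=\tr\!\big(b^{\otimes k}\pi_n(g)^{\otimes k}\big)$, so the left side is $\int|\tr(b^{\otimes k}\sigma(g))|^{2}\,dm_{G_n}$ for $\sigma:=\pi_n^{\otimes k}$, where $\|b^{\otimes k}\|_\infty\le1$; decomposing $\sigma=\bigoplus_\rho\rho\otimes 1_{m_\rho}$ and using Schur orthogonality one checks $\int|\tr(A\sigma(g))|^{2}dm_{G_n}\le\|A\|_\infty^{2}\sum_\rho m_\rho^{2}=\|A\|_\infty^{2}\int|\chi_\sigma|^{2}$, and $\chi_\sigma=\chi_n^{k}$ gives the claim. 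Since $\|\cdot\|_{\psi_2}\asymp\sup_k(2k)^{-1/2}\|\cdot\|_{2k}$, (ii) $=\big(\sup_n\|\chi_n\|_{\psi_2}<\infty\big)$, and by linearity the comparison upgrades to the uniform $\Lambda(\psi_2)$-bound $\sup_n\sup_{\|b\|_\infty\le1}\|f_b\|_{\psi_2}<\infty$, i.e. the maps $J_n\colon b\mapsto f_b$ are uniformly bounded $(M_{d_n},\|\cdot\|_\infty)\to L_{\psi_2}(G_n)$. It remains to deduce from this the norming inequality (i) — equivalently, that $J_n$ is uniformly an isomorphism of $(M_{d_n},\|\cdot\|_1)$ onto $\cl E_n\subset C(G_n)$, the estimate $\|J_n a\|_\infty\le\|a\|_1$ being free. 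This is the non-commutative analogue of the hard half of the Sidon/subgaussian dictionary, and is where the machinery of \cite{Pi2} enters: duality turns the $\Lambda(\psi_2)$-bound into $\|J_n^{*}h\|_{1}\le C\|h\|_{\psi_2^{*}}$ (with $\psi_2^{*}$ the conjugate function), and since $a=J_n^{*}\big(d_n f_a(\cdot^{-1})\big)$ one first gets $\|a\|_1\le C\,d_n\|f_a\|_{\psi_2^{*}}$; the parasitic factor $d_n$ must then be absorbed by a Sidon-type tensorisation/selection argument using that $\pi_n(G_n)$ is a group (and the moment comparison above applied to tensor powers). I expect this second step to be the main obstacle.

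\emph{(iii)$\Rightarrow$(ii).} Since $\tr(u)$ for Haar $u\in U(d_n)$ agrees — with universal constants, and exactly in covariance — with a standard complex Gaussian, condition (iii) is equivalent, up to universal constants, to $\E_\Gamma\sup_{g\in G_n}|\tr(\Gamma\pi_n(g))|\asymp d_n^{3/2}$ for a matrix $\Gamma$ of i.i.d.\ standard complex Gaussians. This is the $\ell$-norm of the Gaussian process $g\mapsto\tr(\Gamma\pi_n(g))$, with natural metric $\delta(g,h)^{2}=\|\pi_n(g)-\pi_n(h)\|_2^{2}=2d_n-2\,\Re\,\chi_n(g^{-1}h)$; by Talagrand's majorizing-measure theorem, taking the invariant Haar measure of $G_n$ as majorizing measure, it is comparable to
\[\gamma_2(G_n,\delta)\ \asymp\ \int_0^{2\sqrt{d_n}}\sqrt{\log\frac{1}{m_{G_n}\big(\Re\,\chi_n>d_n-t^{2}/2\big)}}\;dt .\]
Now $d_n-\Re\,\chi_n\ge0$ has $m_{G_n}$-mean $d_n$ and variance $O(1)$ (because $\int\chi_n=0$, $\int|\chi_n|^{2}=1$ and $\int\chi_n^{2}\in\{0,1\}$), so $(G_n,\delta)$ is sharply concentrated near its diameter and the integral is governed by the fine behaviour of the law of $\chi_n$ near $d_n$; matching it to the moments $\int|\chi_n|^{2k}$ — once more via the tensor powers, whose characters are $\chi_n^{k}$ — converts the hypothesis $\gamma_2\asymp d_n^{3/2}$ into the uniform bounds $\int|\chi_n|^{2k}\le(Ck)^{k}$, which is (ii). The entropy/majorizing-measure estimate and its conversion into moment bounds is the delicate point here. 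Closing the cycle gives the stated equivalence of (i), (ii), (iii).
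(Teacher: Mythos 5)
Your cycle (i)$\Rightarrow$(iii)$\Rightarrow$(ii)$\Rightarrow$(i) is a reasonable plan, and two pieces of it are sound: (i)$\Rightarrow$(iii) is exactly the trivial step (apply (i) to $a=u$ unitary, note $\tr|u|=d_n$, and integrate), and your moment comparison $\|f_b\|_{2k}\le\|\chi_n\|_{2k}$ for $\|b\|_\infty\le 1$ via tensor powers and Schur orthogonality is correct and genuinely useful --- it is the averaging/irreducibility reduction that lets one pass from the ``central'' hypothesis (ii) to the full local subgaussian condition \eqref{se2}. But both hard implications are left unproved, and you acknowledge as much.

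For (ii)$\Rightarrow$(i), what you need is the theorem ``subgaussian implies Sidon''; the paper itself does not reprove it but quotes \cite[Cor.~5.4]{Pi2} and \cite{Piri}. Your duality sketch does not close: the bound $\|a\|_1\le C\,d_n\|f_a\|_{\psi_2^{*}}$ is off by precisely the factor $d_n$ that carries all the content, and the ``tensorisation/selection argument'' meant to absorb it is the entire substance of Rider's and Pisier's work, not a routine step. For (iii)$\Rightarrow$(ii), your reduction to the stationary Gaussian process and the entropy integral is fine (this is \eqref{pe3}, \eqref{pe3'} and \eqref{ab} in the paper), but the decisive step --- converting the single inequality ${\cl I}(\pi_n)\gtrsim d_n$ into the whole family of tail bounds $m_{G_n}(\Re\chi_n>\theta d_n)\le e\,e^{-c\theta^2d_n^2}$ for all $\theta\in(0,1)$, equivalently $\|\chi_n\|_{2k}\le C\sqrt{2k}$ for all $k$ --- is asserted, not carried out, and it does not follow softly: a lower bound on $\int_0^2(\log N(\pi,\vp))^{1/2}\,d\vp$ forces $\log N(\pi,\vp)\gtrsim d^2$ only for $\vp$ below a fixed threshold, whereas (ii) requires quantitative control at every scale, in particular for $\vp$ near $\sqrt2$ (small $\theta$). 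The paper's proof of this direction (the second half of Lemma \ref{fl1}) does not go through entropy at all: it uses the averaging identity $\Phi(\pi(s)a\pi(s^{-1}))=\Phi(a)$ to obtain $\tr|a|\le C_3(\pi)\Phi(a)$, and then the Marcus--Pisier duality theorem identifying the dual of the random-Sidon norm with the Fourier multipliers $L_2\to L_{\psi_2}$; entropy and the Sudakov minoration serve only for the converse direction (ii)$\Rightarrow$(iii). You should either import that duality theorem or supply a genuinely new argument for the near-diameter scales; as written, the proposal has a real gap there.
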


The property (i) means that the singletons $\{\pi_n\}\subset \hat G_n$
are Sidon with constant $C$ in the sense defined e.g. in \cite{HR},
while (ii) means that they are central $\Lambda$ Sidon
with a fixed constant in the sense of \cite{Hut}. See \S \ref{sid} for more background on this. Equivalently,
(ii) says that the tail behaviour
of   $\chi_{n}$ is  dominated (uniformly over $n$) by that of a standard Gaussian  normal
random variable. In other words the $\chi_{n}$'s are uniformly subgaussian.
 Using the Taylor expansion of $x\mapsto \exp{x^2}$ and Stirling's formula,
it is easy to check that (ii) is equivalent to:
 There is a constant $C$ such that  
$$\sup\nl_{p\in  2\N}   \| \chi_{ n} \|_p/\sqrt p \le C.$$
 See \S \ref{sp1} for more on this.

What is a bit surprising in the preceding statement
is that the subgaussian integrability property of the character expressed by
(ii) implies a rather strong property of the  \emph{whole range} of $\pi$, that is perhaps
better described as a ``density" property like in the next corollary.

\begin{cor}\label{c1} The preceding properties are equivalent to
\item[(iv)]  There is a number $0\le {\alpha}<\sqrt 2$ such that
for any $n$ and any $u\in U(d_n)$
there is $t\in G_n$ and $z\in \T$ such that
$$\tr | u- z \pi_n(t)|^2 \le {\alpha}^2 d_n .$$
\end{cor}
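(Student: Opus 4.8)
The plan is to recognise that (iv) is merely a pointwise reformulation of the Sidon-type inequality (i) restricted to \emph{unitary} matrices $a$, and then to close the circle of equivalences via (i)$\Rightarrow$(iv)$\Rightarrow$(iii), invoking the already-established equivalences of Theorem~\ref{t4}. In particular this route avoids ever having to recover the full matricial inequality (i) directly from its restriction to unitaries.

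\emph{Step 1 (the elementary identity).} For $u\in U(d_n)$, $z\in\T$ and $t\in G_n$ I would expand, using that $u$ and $\pi_n(t)$ are unitary and $|z|=1$,
\[
|u-z\pi_n(t)|^2=(u-z\pi_n(t))^*(u-z\pi_n(t))=2I-\bar z\,\pi_n(t)^*u-z\,u^*\pi_n(t),
\]
so that $\tr|u-z\pi_n(t)|^2=2d_n-2\,\Re\!\big(\bar z\,\tr(\pi_n(t)^*u)\big)$. Minimising over $z\in\T$ aligns the phase and leaves $2d_n-2|\tr(\pi_n(t)^*u)|$; minimising over $t\in G_n$ and using $\tr(\pi_n(t)^*u)=\tr(\pi_n(t^{-1})u)$ together with cyclicity of the trace then gives
\[
\inf_{t\in G_n,\;z\in\T}\tr|u-z\pi_n(t)|^2=2d_n-2\,\sup_{g\in G_n}|\tr(u\pi_n(g))|.
\]
Since $G_n$ is compact and $\pi_n$ continuous, the infimum on the left is attained, so (iv) is \emph{equivalent} to the pointwise bound: there is $\beta>0$ such that $\sup_{g\in G_n}|\tr(u\pi_n(g))|\ge\beta d_n$ for all $n$ and all $u\in U(d_n)$, with dictionary $\beta=1-\alpha^2/2$. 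Here $0\le\alpha<\sqrt2$ corresponds exactly to $0<\beta\le1$, and the bound $\beta\le1$ is automatic since $|\tr(u\pi_n(g))|\le\tr|u\pi_n(g)|=d_n$.

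\emph{Step 2 (closing the circle).} If (i) holds, I would apply it with $a=u\in U(d_n)$: then $\tr|u|=d_n$, so $d_n\le C\sup_{g}|\tr(u\pi_n(g))|$, i.e. the bound of Step~1 with $\beta=1/C$, hence (iv) with $\alpha^2=2(1-1/C)<2$. Conversely, assuming (iv) in the form of Step~1, integrating $\sup_{g}|\tr(u\pi_n(g))|\ge\beta d_n$ over $u\in U(d_n)$ against $m_{U(d_n)}$ immediately yields $d_n\le\beta^{-1}\int_{U(d_n)}\sup_{g}|\tr(u\pi_n(g))|\,m_{U(d_n)}(du)$, which is (iii) with $C=1/\beta$. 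Combined with Theorem~\ref{t4} ((i)$\Leftrightarrow$(ii)$\Leftrightarrow$(iii)), the chain (i)$\Rightarrow$(iv)$\Rightarrow$(iii)$\Rightarrow$(i) shows that all four statements are equivalent.

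There is no real obstacle here; the only points calling for a little care are the bookkeeping with adjoints and phases in Step~1 and the remark that compactness of $G_n$ turns the infimum defining (iv) into a genuine minimum. The one genuinely tempting but unnecessary detour would be to try to prove the full inequality (i) directly from its unitary case — this is sidestepped by passing through (iii) and the equivalence (iii)$\Leftrightarrow$(i) already supplied by Theorem~\ref{t4}.
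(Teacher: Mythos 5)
Your proof is correct and follows essentially the same route as the paper: the trace identity $\tr|u-z\pi_n(t)|^2=2d_n-2\Re(\bar z\,\tr(\pi_n(t)^*u))$ turns (iv) into the pointwise lower bound $\sup_g|\tr(u\pi_n(g))|\ge\beta d_n$, one gets (i)$\Rightarrow$(iv) by taking $a=u$ unitary, and (iv)$\Rightarrow$(iii) by integrating over $U(d_n)$, closing the loop through Theorem~\ref{t4}. (Your constant $\alpha^2=2(1-1/C)$ is in fact the correct one; the paper's $2(1-1/C^2)$ appears to be a slip.)
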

 \begin{proof} 
        Assume (i). For simplicity let $G=G_n$, $d=d_n$ and $\pi=\pi_n$.
        Then for any $u\in U(d)$ we have
        $1/C \le   \sup\nl_{g\in G, z\in \T} \Re( z d^{-1}\tr(u\pi(g)) )$.
         Equivalently $$\inf\nl_{g\in G, z\in \T} d^{-1}\tr | u-z\pi(g)|^2 \le 2(1-1/C^2),$$
         and hence (iv) holds.\\
         Conversely assume (iv).  
         Then  for any $u\in U(d)$ we have
         $\inf\nl_{g\in G, z\in \T} d^{-1}\tr | u-z\pi(g)|^2 \le {\alpha}^2$, and hence
        $1-{\alpha}^2/2 \le   \sup\nl_{g\in G, z\in \T} \Re( z d^{-1}\tr(u\pi(g)) )=\sup\nl_{g\in G }d^{-1}|\tr(u\pi(g))| $. Thus 
        $$d(1-{\alpha}^2/2) \le \inf\nl_{u\in U(d)} \sup\nl_{g\in G } |\tr(u\pi(g))| .$$
        A fortiori (iii) holds.
        \end{proof}
        \begin{rem}
Note that for any $u,v\in U(d_n)$ there is $z\in \T$ or even $z\in \{-1,1\}$
such that $\tr | u- z v|^2 \le 2  d_n .$ Indeed,
the average over all such $z$'s is equal to $ 2  d_n .$ 
\end{rem}
  \begin{rem}\label{pr50}[On irreducibility] If a unitary representation
  $\pi_n$ satisfies the inequality appearing in  
  Theorem \ref{t4} (i), then it is irreducible.
  Indeed, if $P_1,P_2$ are mutually orthogonal projections
  onto invariant subspaces
  for $\pi_n$, and if $a$ is a matrix such that $P_1 a P_2=a$
  we have $\tr(a\pi_n(t))=0$ for all $t$, and hence
  the inequality in (i) implies $a=0$, so we must have either $P_1=0$ or  $P_2=0$.
\end{rem}
The fundamental example for Corollary \ref{c1} is very simple: just take
$G=\prod U(d_n)$ and let $\pi_n$ be the coordinates on $G$.
In that case, (iv) obviously holds with ${\alpha}=0$.

 Until recently, the second author   believed naively  that 
 the preceding  Theorem \ref{t4} could be applied to finite groups.
  To his surprise, the first author 
  showed him that it is not so 
   (and he showed him Turing's paper
  \cite{Tu} that already 
  invoked Jordan's theorem to emphasize that general phenomenon, back in 1938 !). 
  The reason is  roughly that any ``large" \emph{finite} subgroup $G\subset U(d)$
  contains a ``large" Abelian subgroup $\Gamma\subset G$ (and even a normal one),
  with an upper bound for the index,
  namely $[G: \Gamma]\le \exp{o(d^2)}$ that contradicts the density
  expressed in (iv), except for the trivial case when $d_n$ stays bounded.
  More precisely, 
 the root for this lies
  in  a Theorem of Camille Jordan  from 1878:\\
  \begin{thm}\label{b1}    Any finite subgroup of $U(d)$ has a normal Abelian subgroup of index
  bounded by a function $f(d)$ depending only on $d$.\end{thm}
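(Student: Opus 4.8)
\medskip
\noindent\emph{Proof proposal.} The plan is to extract from $G$ the subgroup generated by those of its elements that lie very close to the identity, to prove by induction on $d$ that this subgroup is abelian, and to bound its index by a packing argument. Fix a scale $\vp_d\in(0,1/2]$ to be chosen later (it will turn out that $\vp_d=1/(2\sqrt d)$ works), write $\|\cdot\|$ for the operator norm on $M_d$, and put
$$V=\{g\in G:\ \|g-I\|\le\vp_d\},\qquad H=\langle V\rangle.$$
Since $x\mapsto gxg^{-1}$ is an isometry of $(M_d,\|\cdot\|)$ fixing $I$ for every $g\in U(d)$, the set $V$ is invariant under conjugation by $G$, so $H\trianglelefteq G$. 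If $g_1,\dots,g_k$ represent distinct cosets of $H$ in $G$, then $g_i^{-1}g_j\notin V$ for $i\ne j$, i.e.\ $\|g_i-g_j\|=\|I-g_i^{-1}g_j\|>\vp_d$; hence the closed balls $B(g_i,\vp_d/2)$ are pairwise disjoint and contained in $B(0,1+\vp_d/2)$ inside $M_d\cong\R^{2d^2}$, and comparing volumes gives $k\le(1+2/\vp_d)^{2d^2}=:f(d)$. It thus remains to prove that $H$ is abelian.

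The single computational ingredient is the estimate $\|[A,B]-I\|\le 2\|A-I\|\,\|B-I\|$ for $A,B\in U(d)$, which follows because $([A,B]-I)BA=AB-BA=(A-I)(B-I)-(B-I)(A-I)$ and $BA$ is unitary. It has two consequences: first, $\|[u,v]-I\|\le 2\vp_d^2$ whenever $u,v\in V$; second, if $C$ is a nonidentity element of $[H,H]$ at minimal distance $\|C-I\|$ from $I$, then for every $v\in V$ the element $[C,v]$ lies again in $[H,H]$ and satisfies $\|[C,v]-I\|\le 2\|C-I\|\,\vp_d<\|C-I\|$, forcing $[C,v]=I$; since $V$ generates $H$, this makes $C$ central in $H$.

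Now I would induct on $d$, the statement being precisely that the group $H$ constructed above from an arbitrary finite $G\le U(d)$ is abelian; the case $d=1$ is trivial. Assume $H$ is not abelian. Then some commutator $[u,v]$ with $u,v\in V$ is nontrivial --- otherwise the elements of $V$ would commute pairwise and $H=\langle V\rangle$ would be abelian --- so the minimal distance attained on $[H,H]\setminus\{I\}$ is $\le 2\vp_d^2$; let $C$ attain it, so $C$ is central in $H$ by the previous paragraph. If $C$ is not a scalar matrix, its distinct eigenspaces $V_1,\dots,V_r$ ($r\ge 2$) are proper subspaces of $\C^d$ and, being preserved by everything commuting with $C$, are $H$-invariant; restriction gives homomorphisms $\rho_j\colon H\to U(V_j)$ with $\bigcap_j\ker\rho_j=\{I\}$ and $\|\rho_j(v)-I\|\le\|v-I\|\le\vp_d\le\vp_{\dim V_j}$, so the induction hypothesis applied to $\rho_j(H)\le U(\dim V_j)$ shows each $\rho_j(H)=\langle\rho_j(V)\rangle$ is abelian; then $H$ embeds into $\prod_j\rho_j(H)$ and is abelian, contradicting our assumption.

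The remaining possibility, that $C$ is a scalar matrix, is the point that genuinely needs care: scalars are central in $U(d)$ to begin with, so they yield no reduction of dimension, and an element of $G$ near $I$ might a priori be scalar. It is handled by noting that $C\in[H,H]$ forces $\det C=1$, so writing $C=\zeta I$ we get $\zeta^d=1$; since $C\ne I$, $\zeta$ is a nontrivial $d$-th root of unity, whence $\|C-I\|=|\zeta-1|\ge|e^{2\pi i/d}-1|=2\sin(\pi/d)$. Choosing $\vp_d$ so small that $2\vp_d^2<2\sin(\pi/d)$ --- for instance $\vp_d=1/(2\sqrt d)$, using $\sin(\pi/d)\ge 2/d$ for $d\ge 2$ --- contradicts $\|C-I\|\le 2\vp_d^2$. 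This completes the induction, so $H$ is abelian and $f(d)=(1+4\sqrt d)^{2d^2}$ is an admissible (wildly non-optimal) bound, the statement requiring only that some $f(d)$ exist. I expect this scalar case, and the discipline of running the induction on the auxiliary assertion ``$\langle V\rangle$ is abelian'' rather than directly on Jordan's statement, to be the only steps needing care; everything else is the commutator contraction above and a volume estimate.
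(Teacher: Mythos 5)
Your proof is correct, and it is worth noting that the paper itself does not prove Theorem \ref{b1} at all: it cites Jordan's 1878 theorem as classical and refers to the literature (Blichfeldt, Frobenius--Bieberbach, Collins, Weisfeiler) for the quantitative bounds it actually uses. What you have written is a complete, self-contained proof in the spirit of the classical Bieberbach--Frobenius argument: the subgroup $H$ generated by the elements $\vp_d$-close to $I$ is normal (conjugation-invariance of $V$) and of controlled index (volume packing for the operator-norm ball in $\R^{2d^2}$), the commutator identity $([A,B]-I)BA=(A-I)(B-I)-(B-I)(A-I)$ gives the contraction $\|[A,B]-I\|\le 2\|A-I\|\,\|B-I\|$, a minimal nontrivial element of $[H,H]$ is forced to be central, and the two cases (non-scalar: split into eigenspaces and induct on dimension; scalar: $\det C=1$ forces $\|C-I\|\ge 2\sin(\pi/d)$, excluded by the choice $\vp_d=1/(2\sqrt d)$) close the induction. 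Your care with the scalar case and with running the induction on the auxiliary assertion ``$\langle V\rangle$ is abelian'' (so that $\rho_j(H)=\langle\rho_j(V)\rangle$ falls under the inductive hypothesis because $\vp_d\le\vp_{\dim V_j}$) is exactly what is needed. The one caveat to flag is quantitative: your bound $f(d)=(1+4\sqrt d)^{2d^2}=\exp(O(d^2\log d))$ establishes the theorem as stated, but it is \emph{not} of the form $\exp(o(d^2))$, which is the hypothesis the paper needs for Corollary \ref{ceb}, let alone the $\exp(O(d\log d))$ bound of Collins used in Theorem \ref{t10}; so your argument proves the qualitative statement but cannot substitute for the cited references in the paper's quantitative applications.
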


  We will show in Theorem \ref{t10}
that the bound $f(d)\le (d+1)! =\exp{ O(d\log(d))}$ (see below) implies
   for any representation $\pi :\ G \to U(d)$ with \emph{finite} or \emph{amenable} range
  $$\int_{U(d)} \sup\nolimits_{g\in G} | \tr ({u}  \pi(g)) | m_{U(d)}(d{u})=O( (d\log(d))^{1/2}) , $$
  and 
  $$\int \exp    { (  |\chi_\pi/C|^2)} dm_G \le e \Rightarrow (1/C) =O(  (\log d/d)^{1/2} )$$ 
  and these orders of growth are sharp.
  
  Thus we cannot have a sequence of finite groups $G_n$
  satisfying the properties in Theorem \ref{t4} or Corollary \ref{c1} unless the dimensions $d_n$ stay bounded.
  
  Similar questions have been considered previously in the theory of  Sidon sets in duals of non-commutative compact groups. We describe this connection in \S \ref{sid}. When the representations $\pi_n$ are defined on a single compact group
     $G$ (so that $G_n=G$ for all $n$), 
    in many cases it is  known that 
    the dimensions $d_n$ must be bounded.
    This was proved by Cecchini
      \cite{Ce} for $G$ a   Lie group
     and by Hutchinson \cite{Hut} for $G$ a profinite group. 
     Hutchinson's paper implies   the impossibility
     to have finite groups in Theorem \ref{t4} with  unbounded $d_n$'s. We should mention that  
      the latter reference
       (recently pointed out to the second author by  A. Fig\`a-Talamanca)
       already used Jordan's theorem, much like we do.
    
  Although Jordan gave no estimate for the growth of $f$,
 it was later proved by Blichfeldt, based on
 contributions notably by Bieberbach and Frobenius
 (see Remark \ref{bf} for details)  
 that this holds with $f(d)=O(d^{c(d/\log d)^2})$ and a fortiori
  with $f(d)=\exp{o(d^2)}$. The latter estimate is enough 
   to show that  Theorem \ref{t4} is void for  finite groups
   (see Corollary \ref{ceb}  for a precise statement).\\
More precisely,   if $d\ge 71$, any finite group 
 $\Gamma\subset U(d)$ has a normal Abelian subgroup of index at most $(d+1)!$,
 which  is sharp.
 This more recent   bound $(d+1)!$ is due to Collins \cite{collins},  \emph{but uses the
 classification of finite simple groups}. 
 The fact that $(d+1)!$ is sharp is easy: just consider the standard irreducible
 representation on $\C^{d+1}$ of the group of permutations of order $d+1$,
  restricted to the $d$-dimensional subspace $(1,\cdots,1)^\perp$,
  and note that the trivial subgroup  is the only normal Abelian subgroup and
  that  its  index is $(d+1)!$.
   Before Collins, a slightly weaker bound
 had been obtained by Boris Weisfeiler \cite{weisfeiler} (see
  \cite{collins} for details), before he disappeared in Chile, presumably murdered in early 1985.
 
  \section{On Jordan's theorem for amenable subgroups of $U(d)$}
 
 If one interprets Corollary \ref{c1} as a 
 quantitative density property, it is
 natural to wonder 
 about other properties of dense subgroups of $U(d)$.
In particular, since it is well known that for $d\ge 2$ 
  dense subgroups of $U(d)$ cannot be amenable,
  one may ask whether a  group satisfying (iv)
  (with $\alpha<\sqrt 2$ fixed and $d$ large enough)
  must be nonamenable (and a fortiori infinite !).
  Indeed, this turns out to be  true because
 Theorem \ref{b1} extends  to amenable subgroups of $U(d)$.
 The proof is a  reduction to the finite case, showing that any bound valid for finite subgroups of $U(d)$ will also be true for arbitrary amenable subgroups of $U(d)$.
This is due to the first author:
  \begin{pro} \label{tb} 
  Let $f(d)$ be a bound in Jordan's Theorem as above.  Any subgroup  $G\subset U(d)$   that is amenable as a discrete group
 has a normal Abelian subgroup of index at most $f(d)$.
 (In particular if $d\ge 71$ this holds with $f(d)=(d+1)!$ by \cite{collins}).
 \end{pro}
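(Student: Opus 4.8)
The plan is to reduce the amenable case to the finite case already known (Jordan's theorem with bound $f(d)$), exploiting the fact that an amenable subgroup of the compact group $U(d)$ has Zariski closure whose connected component is solvable. First I would pass to the Zariski closure: let $H = \overline{G}^{Zar} \subset GL(d,\C)$ be the Zariski closure of $G$ inside the algebraic group $GL(d,\C)$. Since $G$ is amenable as a discrete group, and amenability of $G$ forces the algebraic group $H$ to be amenable (in the sense that $H(\C)$ is amenable as a topological group, because $G$ is dense in $H$ for the Zariski topology and one can relate invariant means; more directly, a dense subgroup of a real/complex algebraic group in the Hausdorff topology is amenable iff the ambient group is, and the Zariski closure of an amenable group has amenable identity component), the identity component $H^\circ$ is a solvable algebraic group. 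This is the Tits-alternative-type input: a linear group that does not contain a nonabelian free subgroup — in particular an amenable one — has virtually solvable Zariski closure; since $G \subset U(d)$ is moreover bounded, $H^\circ$ is solvable (not merely virtually solvable) and in fact, being a connected solvable subgroup of the reductive group $GL(d,\C)$ with the property that $G$ sits inside a compact group, $H^\circ$ is contained in a (conjugate of a) torus, i.e. $H^\circ$ is a torus $T$.

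Next I would use the structure just obtained. Set $\Gamma_0 = G \cap T$ where $T = H^\circ$ is the torus; this is a normal abelian subgroup of $G$ (normal because $H^\circ \trianglelefteq H$ and $G \subset H$; abelian because $T$ is). The quotient $G/\Gamma_0$ embeds into the component group $H/H^\circ$, which is a finite group, say of order $N = [H : H^\circ]$. So $G$ already has an abelian normal subgroup of finite index $N$ — but $N$ is not yet bounded by $f(d)$; that is the crux. To get the correct bound I would instead argue that the finite group $H/H^\circ$ acts on the torus $T$, and choosing a faithful (say $d$-dimensional, via the embedding $H \hookrightarrow GL(d,\C)$ restricted appropriately) unitary representation, one can find inside $H$ a finite subgroup $F$ that surjects onto $H/H^\circ$ — this is because a compact group with torus identity component and finite component group is of the form $T \rtimes$ (finite) up to finite index, more precisely one picks torsion lifts of generators of the component group and closes up; since the component group is finite, the subgroup they generate together with finitely many torsion elements of $T$ can be taken finite. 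Then $F \subset U(d)$ is a finite subgroup, Jordan's theorem Theorem~\ref{b1} gives a normal abelian $A \trianglelefteq F$ with $[F:A] \le f(d)$, and $[H : H^\circ] = [F : F\cap H^\circ] \le [F : A'] \le f(d)$ for a suitable abelian normal $A'$ obtained by intersecting $A$ with what is needed; finally $\Gamma = G \cap (\text{preimage of }A\cap H^\circ)$ is normal abelian in $G$ of index $\le f(d)$.

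The main obstacle — and the step requiring the most care — is the passage from "the component group of the Zariski closure is finite" to "the component group has order at most $f(d)$": naively one only knows the index is finite, not bounded. The resolution is precisely to manufacture, inside the Hausdorff-compact group $\overline{G}\subset U(d)$ (closure in the operator norm), a \emph{finite} subgroup whose image in the component group of $\overline{G}$ is everything, so that Jordan's bound $f(d)$ for finite subgroups of $U(d)$ transfers. I would carry this out by noting $\overline{G}$ is a compact Lie subgroup of $U(d)$ with abelian (torus) identity component $T_c$; its component group $\pi_0(\overline{G})$ is finite, and since the extension $1 \to T_c \to \overline{G} \to \pi_0(\overline{G}) \to 1$ of a finite group by a torus always admits a finite subgroup of $\overline{G}$ mapping onto $\pi_0(\overline{G})$ (lift generators to torsion elements — possible because $H^2$ obstructions over a torus are torsion and torsion points are dense), we get such an $F$, apply Theorem~\ref{b1} to $F \subset U(d)$, and pull the resulting abelian normal subgroup back to $G$, checking normality using that $\Gamma_0 = G \cap T_c$ is normal in $G$ and that $G/\Gamma_0 \hookrightarrow \pi_0(\overline{G})$.
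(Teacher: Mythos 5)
Your reduction to the structure of the closure (a compact group with toral identity component and finite component group, together with a finite subgroup $F$ surjecting onto the component group --- this is exactly the Borel--Serre/Platonov fact the paper also invokes) matches the paper's overall strategy. But the final step has a genuine gap, and the intermediate goal you set yourself is false as stated. You write that the crux is to pass from ``the component group is finite'' to ``the component group has order at most $f(d)$''; this cannot be done: take $G$ a finite abelian group of enormous order embedded in $U(d)$ --- then $H^\circ$ is trivial and $[H:H^\circ]=|G|$ is unbounded. The proposition asserts the existence of an abelian normal subgroup of index at most $f(d)$, and in general that subgroup must be strictly larger than $G\cap H^\circ$: one has to enlarge $\Gamma_0=G\cap T$ by adjoining elements lying outside the torus.

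This is precisely where your ``pull back'' step breaks down. If $A\trianglelefteq F$ is the abelian normal subgroup Jordan's theorem gives with $[F:A]\le f(d)$, then the preimage in $G$ of the image of $A$ in $\pi_0(\overline G)$ is indeed normal of index at most $f(d)$, but there is no reason it is abelian: it contains all of $G\cap T$ together with coset representatives coming from $A$, and nothing in your construction forces those representatives to centralize the torus, nor to commute with one another modulo the torus. The paper's proof handles exactly this point with two devices absent from your proposal: (i) it builds the finite subgroups as $H_n=T_nH$, where $T_n\cong(\Z/n\Z)^k$ is the $n$-torsion of the torus $G^0$, and takes $n=p^m$ with $p>f(d)$, so that the abelian normal subgroup $A_n\le H_n$ produced by Jordan, having $[T_n:T_n\cap A_n]\le f(d)<p$ while $T_n$ has no proper subgroup of index less than $p$, is forced to contain $T_n$; (ii) since $\bigcup_m T_{p^m}$ is dense in $G^0$ and decreasing chains of compact subgroups of a compact Lie group stabilize, one has $Z(T_{p^m})=Z(G^0)$ for $m$ large, whence $A_{p^m}$ (being abelian and containing $T_{p^m}$) centralizes $G^0$, so that $A_{p^m}G^0$ is abelian, normal, and of index at most $[H_{p^m}:A_{p^m}]\le f(d)$. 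Without some version of (i) and (ii) your argument does not close.
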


\begin{rem}
 Every Abelian subgroup of $U(d)$ can be simultaneously conjugated inside 
 the subgroup $D_d$ of diagonal matrices, so this implies that up to conjugating $G$ by a matrix in $U(d)$ we have $[G:G \cap D_d] \leq (d+1)!$.
\end{rem}
 
 \begin{proof}[Proof of Proposition \ref{tb}] Being amenable $G$ has a solvable subgroup of finite index, by the Tits alternative \cite{tits}. The closure of $G$ in the usual topology of $U(d)$ is a compact Lie subgroup with a solvable subgroup of finite index. Without loss of generality we may assume that $G$ is closed. Then the connected component of the identity $G^0$ is solvable. But solvable compact connected Lie groups are Abelian, isomorphic to $(\R/\Z)^k$ for some integer $k$. By a well-known fact due Borel-Serre \cite[Lemme 5.11]{borel-serre} and Platonov \cite[10.10]{wehrfritz}, there is a finite subgroup $H$ of $G$ such that $HG^0=G$. For an integer $n$, let $T_n:=\{t \in G^0 ; t^n=1\}$. This is a characteristic subgroup of $G^0$, which is isomorphic to $(\Z/n\Z)^k$. Hence it is normalized by $H$ and thus $H_n:=T_nH$ is a finite subgroup of $G$. We may apply Jordan's lemma with Collins' bound \cite{collins} to this $H_n$ and obtain an Abelian normal subgroup $A_n\leq H_n$ such that $[H_n:A_n]\leq f(d)$. In particular $[T_n:T_n\cap A_n]\leq f(d)$. If $n=p^m$ is a power of a prime $p$, then $T_n$ has no proper subgroup of index $<p$. So if $p>f(d)$, then $A_n$ contains $T_n$. Fix such a $p$. Note that the (increasing) union of all $T_{p^m}$, $m\geq 1$, is dense in $G^0$. This implies that the intersection of all $Z(T_{p^m})$ is $Z(G^0)$, where $Z(K):=\{t \in G; tk=kt \forall k\in K\}$ denotes the centralizer subgroup of $K$. 
 But   any decreasing sequence of compact subgroups of a given compact Lie group is stationary (``Noetherianity"),   so this intersection is finite, and hence there is $m\in \N$ such that $Z(T_{p^m})=Z(G^0)$. It follows that $A:=A_{p^m}G^0$ is an Abelian subgroup of $G$, which is normal and of index $$[G:A]=[HG^0:A_{p^m}G^0]=[H_{p^m}G^0:A_{p^m}G^0]\leq [H_{p^m}:A_{p^m}]\leq f(d).$$
\end{proof}

 \section{Consequence for the metric entropy}\label{pme}
 
 In this section, we show that
 Jordan's theorem (or Proposition \ref{tb}) with $f(d)=e^{o(d^2)}$ implies 
a non trivial property of the metric entropy
of any finite (or amenable) subgroup of $U(d)$.

Let $(T,\d)$ be any set $T$ equipped with a metric
or pseudo-metric $\d$.
Given a subset $S\subset T$ we denote
by $N(S,\d,\vp)$ the smallest number of a covering of $S$ by open balls of
$\d$-radius $\vp$. 

We will mainly consider the distances $\d_2$ and $\d_\infty$, corresponding to the Hilbert-Schmidt norm and the operator norm respectively, defined on $M_d$
as follows:
$$\forall u,v\in M_d \quad \d_2(u,v)=(d^{-1}\tr |u-v|^2 )^{1/2}$$
$$\forall u,v\in M_d \quad \d_\infty(u,v)=\|u-v\|.$$
Note  \begin{equation}\label{pe9}\d_2(u,v)\le \d_\infty(u,v).\end{equation}
 
\begin{lem}\label{el1}
For any $d$ 
and  any subgroup $G\subset U(d)$
containing an Abelian subgroup of index $k$
we have
 $$\forall \vp\in (0,2)\quad 
 N(G,\d_2,\vp) \le k (2\pi/\vp)^d.$$
\end{lem}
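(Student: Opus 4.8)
The plan is to reduce the covering problem for $G$ to the covering problem for its abelian subgroup $A$ of index $k$, and then to cover $A$ explicitly. First I would write $G = \bigcup_{i=1}^k g_i A$ as a union of $k$ cosets. Left translation by a fixed unitary is a $\d_2$-isometry of $M_d$ (since $\d_2(g_iu,g_iv)^2 = d^{-1}\tr|g_i(u-v)|^2 = d^{-1}\tr|u-v|^2$), so each coset $g_iA$ is isometric to $A$ and hence satisfies $N(g_iA,\d_2,\vp) = N(A,\d_2,\vp)$. Subadditivity of covering numbers over a finite union then gives $N(G,\d_2,\vp) \le k\, N(A,\d_2,\vp)$, so it remains to show $N(A,\d_2,\vp) \le (2\pi/\vp)^d$.

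For the abelian part, I would use that any abelian subgroup of $U(d)$ is simultaneously diagonalizable: after conjugating by a fixed $w\in U(d)$ (which is again a $\d_2$-isometry and so does not change covering numbers), we may assume $A \subset D_d$, the group of diagonal unitaries, which is contained in the torus $\T^d$. For two diagonal unitaries $u = \mathrm{diag}(e^{i\theta_1},\dots,e^{i\theta_d})$ and $v = \mathrm{diag}(e^{i\phi_1},\dots,e^{i\phi_d})$ one computes $\d_2(u,v)^2 = d^{-1}\sum_{j=1}^d |e^{i\theta_j}-e^{i\phi_j}|^2 \le d^{-1}\sum_{j=1}^d |\theta_j - \phi_j|^2$, using $|e^{is}-1| \le |s|$. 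Thus the map from $[0,2\pi)^d$ (with normalized Euclidean metric $(d^{-1}\sum |\theta_j-\phi_j|^2)^{1/2}$) onto $D_d$ is $1$-Lipschitz, and it suffices to cover $[0,2\pi)^d$ in that rescaled metric by balls of radius $\vp$. Covering each coordinate interval of length $2\pi$ by $\lceil 2\pi/\vp \rceil$ subintervals of length $\vp$ — more precisely choosing a grid so that every point is within $\d_2$-distance $\vp$ of a grid point — yields at most $(2\pi/\vp)^d$ balls (here one uses $\vp < 2$, so $2\pi/\vp > \pi > 1$, to absorb the ceiling; the exact bookkeeping is the routine part I would not belabor).

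The one genuine subtlety — the place where a little care is needed rather than a real obstacle — is the passage from covering the cube $[0,2\pi)^d$ in the $\ell^2$ metric to the stated bound $(2\pi/\vp)^d$: a naive product of one-dimensional $\vp$-nets covers in the $\ell^\infty$ metric, not $\ell^2$, and rescaling would cost a factor $d^{d/2}$. The fix is to note that we only need to cover $D_d$, and the relevant estimate $|e^{i\theta_j}-e^{i\phi_j}| \le |\theta_j-\phi_j|_{\mathrm{mod}\,2\pi}$ lets us work on $\T^d$ directly; a volumetric argument then gives $N(\T^d, \d_2, \vp) \le (2\pi/\vp)^d$, since $\T^d$ with the metric $d^{-1/2}\|\cdot\|_{\ell^2}$ has "diameter" comparable to a fixed constant and total mass $(2\pi)^d/d^{d/2}$ in the rescaled coordinates, while a $\d_2$-ball of radius $\vp/2$ has volume $(\vp/2)^d \omega_d / d^{d/2}$ where $\omega_d$ is the volume of the Euclidean unit ball; the ratio, using $\omega_d \ge (2\pi/d)^{d/2}/\sqrt{\pi d}$ or a cruder estimate, is at most $(2\pi/\vp)^d$ after the constants are tracked. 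Alternatively, and more cleanly, one covers $\T^d$ by translates of the box $\prod_j [\theta_j, \theta_j + \vp/\sqrt{d}\,)$, each of which has $\d_2$-diameter at most $\vp$; there are $\lceil 2\pi\sqrt d/\vp\rceil^d$ such boxes, which is again $\le (2\pi/\vp)^d$ is false as stated — so in fact the intended argument is the volumetric one, and the key step to get right is exactly this volume comparison on the torus.
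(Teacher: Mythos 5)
Your first half --- the coset decomposition $G=\bigcup_{i\le k} g_iA$, the isometry of left translation, the subadditivity $N(G,\d_2,\vp)\le k\,N(A,\d_2,\vp)$, and the reduction by conjugation to $A\subset D_d$ --- is exactly the paper's argument and is correct. The problem is the second half: you never actually establish $N(D_d,\d_2,\vp)\le(2\pi/\vp)^d$. You correctly sense that a product of one-dimensional nets controls the sup metric rather than the Euclidean one, but you then misdiagnose this as a genuine obstacle (``rescaling would cost a factor $d^{d/2}$''), concede that your box-counting alternative ``is false as stated'', and fall back on a volumetric comparison on $\T^d$ that you leave unfinished (``after the constants are tracked''). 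That route is not only incomplete but delicate: on the torus a ball of normalized radius $\vp/2$ corresponds to a Euclidean ball of radius $\vp\sqrt d/2$, which can wrap around, and with your crude lower bound on the volume of the Euclidean unit ball the resulting ratio is of order $(\sqrt{8\pi}/\vp)^d\sqrt{\pi d}$, which exceeds $(2\pi/\vp)^d$ for small $d$ --- and the lemma is claimed for every $d$. So as written the proof has a real gap.

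The missing observation is the paper's inequality \eqref{pe9}: $\d_2$ is the \emph{normalized} Hilbert--Schmidt distance, so for diagonal unitaries $\d_2(u,v)^2=d^{-1}\sum_j|u_{jj}-v_{jj}|^2\le\max_j|u_{jj}-v_{jj}|^2=\d_\infty(u,v)^2$. Hence $N(D_d,\d_2,\vp)\le N(D_d,\d_\infty,\vp)$, and in the sup metric the product of $d$ one-dimensional $\vp$-nets of the circle (at most $2\pi/\vp$ arcs each, since chord length is at most arc length and $\vp<2<\pi$) immediately gives $N(D_d,\d_\infty,\vp)\le(2\pi/\vp)^d$, with no volume computation. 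The factor $d^{d/2}$ you feared would only appear for the unnormalized Hilbert--Schmidt metric; with the normalization built into $\d_2$ the ``subtlety'' you flagged is not there.
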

\begin{proof} 
Let $G=\cup_{j\le k} t_j \Gamma$ be the disjoint decomposition into cosets.
Then
$$ 
 N(G,\d_2,\vp) \le \sum\nl_{j\le k}  N(t_j \Gamma,\d_2,\vp) .$$
 Clearly $N(t_j \Gamma ,\d_2,\vp)=N( \Gamma,\d_2,\vp)$
 and since $\Gamma $ is Abelian 
 the matrices in $\Gamma$ are simultaneously diagonalizable
 so that we may assume that  $\Gamma$  is included in the set $
  D_d$ of all diagonal matrices with entries in $\T$. Thus by \eqref{pe9} we have
 $N( \Gamma,\d_2,\vp)\le N( D_d,\d_2,\vp)\le N( D_d,\d_\infty,\vp) \le (2\pi/\vp)^d$,
 from which the lemma follows.
 \end{proof}

\begin{rem}\label{pr3} Let $0<\vp<2$.
Let $A_\vp(d)$ be the smallest number $N$
such that any subgroup 
$G\subset U(d)$ satisfying
$$N(G,\d_2, \vp) > N$$
must be non-amenable as a discrete group, and let
$$H_\vp(d)=\log A_\vp(d).$$
By the preceding we have
$A_\vp(d)\le f(d) (2\pi/\vp)^d,$
and hence   
$H_\vp(d)\le \log f(d) + d\log (2\pi/\vp).$
Thus, assuming $d\ge 71$, the bound in Proposition \ref{tb}
implies a fortiori (by Stirling) $$H_\vp(d)\le d\log( d/e)+ d\log (2\pi/\vp).$$
We will   show in  \eqref{pe32} that this is   asymptotically sharp if we keep $\vp>0$
fixed and let
 $d\to \infty$. 
 
 But first we need to clarify the relationship between the various
ways to estimate the covering numbers  of groups with respect to a 
translation invariant 
 metric in the presence of a translation invariant
  probability (Haar) measure.\\
 Let $N'(G,\d_2, \vp) $ be the smallest number  
of a covering of $G$ by  
open balls of $\d_2$-radius $\vp$ with \emph{centers in $G$}.
It is easy to check that
$N( G,\d_2, \vp) \le N'(  G,\d_2, \vp)\le  N( G,\d_2, \vp/2)$ for any $\vp>0$.

We may consider the closure ${\bar G}\subset U(d)$ of $G$
equipped with its normalized Haar measure
 $m_{\bar G}$. Then by translation invariance,
 we have
 \begin{equation}\label{pe29} 1/m_{\bar G}(\{g\in \bar G\mid \d_2(g,1)<\vp\})\le N'(\bar G,\d_2, \vp)\le
1/m_{\bar G}(\{g\in \bar G\mid \d_2(g,1)<\vp/2\}).\end{equation}
Obviously, we have $N'(\bar G,\d_2, \vp_1) \le N'(  G,\d_2, \vp) \le N'(\bar G,\d_2, \vp)$
for any $\vp_1>\vp$.\\
Thus (say) $m_{\bar G}(\{g\in \bar G\mid \d_2(g,1)<3\vp\})<  1/A_\vp(d)$
implies that $G$ is non-amenable.

To be more concrete, if we set, say, $\vp=1/30$,   there is $c'>0$ such that for all $d$ large enough if 
$$\log \frac{1}{m_{\bar G}(\{g\in \bar G\mid \d_2(g,1)<1/10\}) }  \ge c' d\log d $$
then  $G$ is not amenable.
We will now show that this is  asymptotically sharp.
\end{rem}
\begin{rem}[A case study]\label{pr1}
Let $\cl G\subset U(d)$ (actually $\cl G\subset O(d)$) be the finite subgroup
formed of all the matrices of the form
$$u=\sum\nl_1^d \vp_i e_{i,\sigma(i)} $$ 
where $(\vp_i)\in \{-1,1\}^d$ and $\sigma$ is in  the symmetric group $ S(d)$.
The group $\cl G$ is isomorphic to   the semidirect product
${\{-1,1\}^d} \rtimes S(d)$.
Then
$$\tr(u)=  \sum\nl_{ i\in {\rm Fix}(\sigma)}  \vp_i
$$
$$\d_2(u,1)^2= 2(d-\tr(u))= 2 \sum\nl_{ i\in {\rm Fix}(\sigma)} (1-\vp_i)  +2 (d-  |  {\rm Fix}(\sigma) |) $$
where ${\rm Fix}(\sigma)=\{i\mid \sigma(i)=i\}$. 
For any $\vp>0$ we have
\begin{equation}\label{pe30}\{u\in {\cl G}\mid  \d_2(u,1)<\vp\}=\{u\in {\cl G}\mid  \tr(u)> d(1-\vp^2/2)\}.
\end{equation}
Let $X_j $ be the number of permutations in $S(d)$ with exactly $j$ fixed points. 
Then for any $0\le k< d$
\begin{equation}\label{pe15} m_{\cl G}(\{u\in {\cl G}\mid  \tr(u)>  k\}
=(d!)^{-1}\sum\nl_{j>k} X_j \P(\{S_j>k\})\end{equation}
where $S_j=\vp_1+\cdots+\vp_j$ is the sum of $j$ independent 
(uniformly distributed) choices of signs, and 
\eqref{pe15} is $=0$ when $k\ge d$. Thus we have  for any $0\le k< d$ (note that $X_d=1$
and $\P(\{S_d>k\})\ge 2^{-d}   $)
$$ (d!)^{-1}  2^{-d} \le m_{\cl G}(\{u\in {\cl G}\mid  \tr(u)>  k\} \le   (d!)^{-1}\sum\nl_{j>k} X_j . $$

It is easy to see
that $X_j= {{d} \choose{ j}} D(d-j)$ where $D(n)$ denotes the number
of derangements of an $n$-element set, i.e. the number of permutations
without fixed point in $S(n)$. It is well known  
(see e.g. \cite[p. 67]{Sta})  
that $D(n)$ is of order $n!/e$  when $n\to \infty$, 
and more precisely: for any $n\ge 1$    (note $D(1)=0$)
 \begin{equation}\label{pe11} D(n)=n! (1-\frac{1}{1!}+\frac{1}{2!}-\frac{1}{3!}+\cdots+(-1)^n\frac{1}{n!}).\end{equation}
This shows $n! \ge D(n)\ge n! /3$ for all $n>1$.
Contenting ourselves (for the moment) with the obvious
bound $D(d-j)\le (d-j)!$ we find
 \begin{equation}\label{pe12}(d!)^{-1} X_j\le (j!)^{-1},\end{equation} and hence
$(d!)^{-1}\sum\nl_{j>k} X_j \le e {(k+1)} !^{-1} $.
By Stirling's formula
 $(d!)^{-1}\sum\nl_{j>k} X_j \le e (e/(k+1))^{k+1}$. Therefore  for any $0\le k< d$ 
 \begin{equation}\label{pe31} (d!)^{-1}  2^{-d} \le m_{\cl G}(\{u\in {\cl G}\mid  \tr(u)>  k\} \le  e (e/(k+1))^{k+1} . \end{equation}
 Recalling \eqref{pe29} and \eqref{pe30} and choosing $k =[d(1-\vp^2/2)]$ we find
 $$\log N(\cl G,\d_2,\vp/2) \ge  \log N'(\cl G,\d_2,\vp) \ge   ({k+1} )\log ((k+1))-k -2,$$
 from which we deduce, for any $0<\vp <\sqrt 2$  
\begin{equation}\label{pe32}H_{\vp/2} (d) \ge  (1-\vp^2/2) d \log (d/e)   -c_3,\end{equation}
 where $c_3$ is a fixed constant independent of $d$.
\end{rem}
 
\begin{rem}
Similarly, assuming $G\subset U(d)$ amenable,
let $\varphi$ be an invariant mean on $G$. 
Since both distance and mean are translation invariant
it is easy to check that
$$1/\varphi(\{g\in G\mid \d_2(g,1)<\vp\})\le N'(G,\d_2, \vp)\le
1/\varphi(\{g\in G\mid \d_2(g,1)<\vp/2\}).$$
By the preceding reasoning $\varphi(\{g\in \bar G\mid \d_2(g,1)<3\vp\})<  1/A_\vp(d)$
would imply that $G$ is not amenable. Therefore we must have
$$\varphi(\{g\in \bar G\mid \d_2(g,1)<3\vp\})\ge  1/A_\vp(d) \ge ( f(d) (2\pi/\vp)^d)^{-1},$$
and hence, say taking $\vp=1/9$ 
$$\varphi(\{g\in \bar G\mid \d_2(g,1)<1/3\})\ge (18\pi /d)^d /(d+1).$$
\end{rem}

\section{Subgaussian variables}\label{sp1}

To conform with a commonly used notation, we set
$$\forall x\in \R_+\quad \psi_2(x)=e^{x^2} -1.$$

Given a measure space $(T,m)$ we denote by $L_{\psi_2}(T,m)$ the (Orlicz)
space   formed of all  the measurable complex valued functions $F$
for which  there is $c>0$
such that $\int \psi_2(|F|/c)   dm <\infty$. We denote
  \begin{equation}\label{ep66}\|F\|_{\psi_2}=\inf\{c>0\mid \int \psi_2(|F|/c)  dm\le \psi_2(1)  \}.\end{equation}
  
When $F$ is real valued with $\int Fdm=0$ and
$m(T)=1$    it is not hard to show
that   $\|F\|_{\psi_2}$
 is equivalent to the smallest constant $C$ such that
 $$\forall t\in \R\quad 
  \int \exp{ ( t  F -Ct^2/2)} dm \le 1.$$ 
  Since the equality case
    for  $C=1$ characterizes the standard Gaussian variables,
    this explains why we view (ii) as a subgaussian estimate.
    
    Using the Taylor expansion of the exponential function it is easy to show
    that $\|F\|_{\psi_2}$
 is equivalent (with absolute equivalence constants)
 to $F\mapsto \sup\nl_{p\ge 2} \|F\|_{p}/\sqrt p$. More precisely,
 we can restrict if we wish to even integers: there is a constant $\lambda>0$
 such that for any complex valued measurable  $F$
 we have
  \begin{equation}\label{se3}  \lambda^{-1}     \|F\|_{\psi_2} \le    \sup\nl_{n\in \N_*} \|F\|_{2n}/\sqrt {2n}  \le \lambda\|F\|_{\psi_2}  .\end{equation}
 Consider the case when $T$ is a compact group $G$
 with  $m=m_G$ and let $F=\chi_\pi$ be the character of some $\pi\in \hat G$.
 Then, for any $i,j\ge 0$,  the unitary representation $ \pi^{\otimes i} \otimes \bar{\pi}^{\otimes j} $
 admits a decomposition into irreducibles that we may write as:
  \begin{equation}\label{pe8}\pi^{\otimes i} \otimes \bar{\pi}^{\otimes j} =\oplus_{\sigma\in \hat G} m_\pi (i,j;\sigma) \sigma,\end{equation}
  where the integer $m_\pi (i,j;\sigma)$ is the multiplicity (possibly $=0$)
  of $\sigma$ in $ \pi^{\otimes i} \otimes \bar{\pi}^{\otimes j} $.
  Taking the $L_2$-norm of the trace of both sides of \eqref{pe8} we find
  $$\int |\chi_\pi|^{2(i+j)} dm=\sum\nl_{\sigma\in \hat G} m_\pi (i,j;\sigma)^2.$$
  Therefore, the condition 
  $$ \sup\nl_{n\in \N_*} \|\chi_\pi\|_{2n}/\sqrt {2n}  \le C$$
can be reformulated ``arithmetically" as saying that for any $i,j\ge 0$
(or merely for all $i\ge 1$ with $j=0$)
$$\sum\nl_{\sigma\in \hat G} m_\pi (i,j;\sigma)^2  \le C^{2(i+j)} ({2(i+j)} )^{i+j}.$$

\section{Random Fourier series} 

We   describe in this section the connection of 
Theorem \ref{t4} and Corollary \ref{c1}
 to  Gaussian random Fourier series
in the style of \cite{MaPi}. More recent
information on general Gaussian random processes can be found
in \cite{Ta2}.

We denote by $g_d$ a random $d\times d$-matrix with entries
 $\{g_{d} (i,j)\mid 1\le i,j\le d\}$ forming an i.i.d. family
 of complex valued Gaussian variables
 with $\EE|g_{d}(i,j)|^2=1/d$, on a suitable probability space
$(\Omega, \P)$.

Let $G$ a compact group $G$ and let $(a_\sigma )_{\sigma\in \hat G}$
be a family of ``Fourier coefficients", i.e. assuming that 
$\sigma$ takes its values in $U(d_\sigma)$ we assume
that $a_\sigma\in M_{d_\sigma}$. We also assume
that $\sum d_\sigma \tr(|a_\sigma|^2) <\infty$.
 The  associated random
Fourier series is 
 the random process $(S_t)_{t\in G}$  
defined by
\begin{equation}\label{pe16} S_t(\omega)=\sum\nl_{\sigma\in \hat G} d_\sigma \tr (a_\sigma g_{d_\sigma}(\omega) \sigma(t)),\end{equation}
where the family of random matrices $(g_{d_\sigma} )_{\sigma\in \hat G}$ is an independent one. 
We associate to it the
pseudo-distance defined on $G$ by $\d_S(s,t)=\|S_s-S_t\|_2$.  The main results in \cite{MaPi}
show  that the Dudley-Fernique entropy
condition
$$\int_0^\infty (\log N(G,\d_S,\vp) )^{1/2} d\vp<\infty$$
 that was known to characterize the a.s. boundedness of $(S_t)$
is also equivalent to the a.s. boundedness of  random
Fourier series associated to more general randomizations than the Gaussian one. In particular,
 the same characterization holds  
 for independent unitary matrices
uniformly distributed over $\prod\nl_{\sigma\in \hat G} U(d_\sigma)$ in place of
 $(g_{d_\sigma} )_{\sigma\in \hat G}$.  In fact these results do not require the irreducibility of the  
  $\sigma$'s, as long as one uses the metric entropy associated to  
   $\d_S$. If one removes the 
  irreducibility assumption, even the case of $S_t$ reduced to a single sum
  $S_t=\tr(a g_{d_\pi}\pi(t))$ with $a\in M_{d_\pi}$ is non trivial, and actually
  it can be argued (by decomposing $\pi$ into irreducible components) that this case is equivalent to  the one in \eqref{pe16}.
  In this paper, we concentrate  
  on the even more special case
  when $a$ is the identity matrix.
  
  Let $G$ be any group
  and let $\pi:\ G\to U(d)$ be a representation. 
  We will estimate the random variable $Z_\pi$ defined on $(\Omega, \P)$ by
  $$Z_\pi(\omega)
  = \sup_{t\in G}| \tr (g_d(\omega) \pi(t) )|.
  $$
For our considerations,  it will be essentially equivalent
  to replace it by the variable
  $$u\mapsto  \sup_{t\in G}| \tr (u \pi(t) )|,
  $$
defined when $u$ is chosen uniformly in $ U(d)$.\\
  We associate to $\pi$ the (pseudo-)distance $\d^\pi$ 
  defined on $G$ by
  $$\d^\pi(s,t)=  (d^{-1} \tr|\pi(s)-\pi(t)|^2)^{1/2}.$$
  We will repeatedly use the observation that
  \begin{equation}\label{pe10}
  \d^\pi(s,t)^2=    2(1  -d^{-1}\chi_\pi(s^{-1}t) ).\end{equation}
  Let $\vp>0$.
 We denote by $N(\pi,\vp)$ the smallest number of a covering of $G$ by
 open balls  of radius
$\vp$ for the metric $\d^\pi$.
 We then 
introduce the so-called metric entropy integral  
  $$ {\cl I}(\pi)= \int_0^2 (\log N(\pi,\vp))^{1/2}d\vp.$$
 Note $\log N(\pi,\vp)=0$ for all $\vp>2$ since the diameter of $G$ is at most 2.
 
In the present very particular situation 
 the Dudley-Fernique theorem for Gaussian random Fourier series (see \cite{MaPi}),
   says that there are numerical positive constants $b_1,b_2$
 such that for any $G$, $\pi$ and $d$  
  \begin{equation}\label{pe3} b_1 {\cl I}(\pi) \le \E  Z_\pi  \le
 b_2 {\cl I}(\pi).\end{equation}
 
 By elementary arguments  (based on the translation invariance
       both of the metric ${\d}^\pi$ and the measure $m_G$)
       we have  (as in \eqref{pe29}) for any $\pi$
   \begin{equation}\label{ab} {m_G(\{t\mid {\d}^\pi(t,1)<\vp\})}^{-1}\le N(\pi,\vp)\le {m_G(\{t\mid {\d}^\pi(\pi,1)<\vp/2\})}^{-1},\end{equation}
   so that ${\cl I}(\pi) $
is equivalent to
${\cl I}'(\pi) =\int_0^2 (-\log {m_G(\{t\mid {\d}^\pi(t,1)<\vp\})}  )^{1/2}d\vp$.
 
By the comparison arguments from \cite{MaPi} we also have
for suitable constants $b_1,b_2$
  \begin{equation}\label{pe3'} b_1 {\cl I}(\pi) \le \int_{U(d)} \sup\nolimits_{t\in G} | \tr ({u}  \pi(t)) | m_{U(d)}(d{u})  \le
 b_2 {\cl I}(\pi).\end{equation}
 A fortiori, this shows that \\
 \centerline{$M_u=\int_{U(d)} \sup\nolimits_{t\in G} | \tr ({u}  \pi(t)) | m_{U(d)}(d{u})  
\text{  and  }  M_g=\E\sup\nolimits_{t\in G} | \tr (g_d  \pi(t)) | \ (=\E  Z_\pi)$}
are equivalent.\\
Actually, in the present situation, the latter equivalence 
can be proved directly very easily, using the matricial
version of the ``contraction principle"  in \cite[p. 82]{MaPi}. We briefly indicate
the argument: one direction uses the fact that the polar decomposition
 of $g_d$ is such that $g_d$ (with respect to $\P$) has the same distribution
 as the variable $u|g_d|$  with respect to $m_{U(d)}\times \P$ on the product $U(d)\times \Omega$. This implies that  $u\E|g_d|$ can be obtained
 from  $g_d$ by the action of a conditional expectation. Since $\E |g_d|= b_d I$
 with $b_d\ge b$ for some  numerical constant $b>0$,
 this gives us $b M_u\le   M_g$. To prove the converse,
 we note (``contraction principle") that   a convex function on $M_d$ is maximized on the unit ball at an extreme point, i.e. at a matrix in $U(d)$, and so for any fixed $\omega$, we have
 $$ \int_{U(d)} \sup\nolimits_{t\in G} | \tr ({u} g_d(\omega) \pi(t)) | m_{U(d)}(d{u})
 \le \|g_d(\omega)\| \int_{U(d)} \sup\nolimits_{t\in G} | \tr ({u}   \pi(t)) | m_{U(d)}(d{u}) $$
 and hence after integration in $\omega$ with respect to $\P$
$$ \int_{U(d)}  \sup\nolimits_{t\in G} | \tr ({u} g_d(\omega) \pi(t)) | m_{U(d)}(d{u})d\P(\omega) 
 \le \E\|g_d \| \int_{U(d)} \sup\nolimits_{t\in G} | \tr ({u}   \pi(t)) | m_{U(d)}(d{u}). $$
 Since, as is well known,  $ \E\|g_d \|  $ remains bounded by a constant $b'$ when $d\to \infty$
(see e.g. \cite[p. 78]{MaPi}) this implies
the converse inequality
$ M_g\le   b' M_u$.
 
 Since $N(\pi,\epsilon)$ is a non-increasing function of $\epsilon$, note also the elementary minoration
 \begin{equation}\label{sud}
 \sup\nl_{0<\vp\le 2 } \vp (\log N(\pi,\vp))^{1/2} \le {\cl I}(\pi) ,\end{equation} 
 which, by \eqref{pe3'}, gives us the lower bound
\begin{equation}\label{sud'}  b_1\sup\nl_{0<\vp\le 2 } \vp (\log N(\pi,\vp))^{1/2} \le \int_{U(d)} \sup\nolimits_{t\in G} | \tr ({u}   \pi(t)) | m_{U(d)}(d{u}). 
\end{equation}
In the Gaussian case, 
we also have $b_1\sup\nl_{0<\vp\le 2 } \vp (\log N(\pi,\vp))^{1/2} \le M_g$. The latter is known 
 as the Sudakov minoration (see e.g. \cite[p. 69]{Piv} or \cite[p. 80]{LeTa}). While the preceding 2-sided bound \eqref{pe3}
 requires the translation invariance
 of the distance (or the stationarity of the associated Gaussian process),
 Sudakov's lower bound holds  for general Gaussian processes.

 \section{Proofs}\label{pfs}
 
 We first indicate where 
 the proof of Theorem \ref{t4} can be found.
  By \cite[Cor. 5.4]{Pi2} (see also \cite{Piri})
    (i) and (ii) in Theorem \ref{t4} are equivalent. Moreover, by
     \cite[Prop. 5.3]{Pi2} 
     they are equivalent
    to (iii). Note that complete details for this can be found in \cite{Piri}
    (together with a correction to another assertion in  \cite{Pi2}).
    As for the equivalence between (ii) and (iii), a more precise 
    two sided inequality holds for the corresponding best possible constants:
    \begin{lem}\label{fl1}
    Let $G$ be a compact group, $d\ge 1$ and  $\pi:\ G \to U(d)$  an irreducible unitary
    representation. We set 
    $$C_2(\pi)= \|\chi_\pi\|_{\psi_2} \text{  and  } C_3(\pi)=\frac{d}{\int_{U(d)} \sup\nolimits_{t\in G} | \tr ({u}   \pi(t)) | m_{U(d)}(d{u})}.$$
    There is a numerical constant $K>0$
  such that for any
       $G,d,\pi$
       $$K^{-1} C_3(\pi)  \le C_2(\pi) \le K C_3(\pi) .$$ 
    \end{lem}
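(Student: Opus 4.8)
This lemma is the quantitative, two--sided form of the equivalence (ii)$\Leftrightarrow$(iii) of Theorem \ref{t4}, proved in \cite[Cor.\ 5.4 and Prop.\ 5.3]{Pi2} (with the correction of \cite{Piri}); the plan is to reprove that equivalence while keeping every constant numerical. Set $\gamma:=\|\chi_\pi\|_{\psi_2}=C_2(\pi)$. By the comparison results of \cite{MaPi} recalled above --- \eqref{pe3'}, the equivalence of $M_u$ with $M_g=\E Z_\pi$, and the equivalence of $\cl I(\pi)$ with $\cl I'(\pi)$ via \eqref{ab} --- one has $C_3(\pi)^{-1}=M_u/d\asymp\cl I(\pi)/d$ with numerical constants, so it suffices to prove $c^{-1}d/\gamma\le\cl I(\pi)\le c\,d/\gamma$ for a numerical $c$. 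I will use three facts from \S\ref{sp1}: $\pi$ being irreducible, $\|\chi_\pi\|_2=1$, hence $\gamma\le\|\chi_\pi\|_\infty\le d$ (so $d/\gamma\ge1$); moreover $\int\exp(|\chi_\pi|^2/\gamma^2)\,dm_G\le e$ and $\sup_{n\ge1}\|\chi_\pi\|_{2n}/\sqrt{2n}\le\lambda\gamma$. Finally, by \eqref{pe10} (using $\tr|\pi(t)-1|^2=2(d-\Re\chi_\pi(t))$), $\{t:\d^\pi(t,1)<\vp\}=\{t:\Re\chi_\pi(t)>d(1-\vp^2/2)\}$.

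First I would dispose of the bound $\cl I(\pi)\ge c^{-1}d/\gamma$, i.e.\ $C_3(\pi)\le K C_2(\pi)$, which is the implication (ii)$\Rightarrow$(iii) and is elementary. From $\int\exp(|\chi_\pi|^2/\gamma^2)\,dm_G\le e$ and Markov, $m_G(|\chi_\pi|>s)\le e^{1-s^2/\gamma^2}$ for $s\ge0$. For $0<\vp<\sqrt2$ the set $\{t:\d^\pi(t,1)<\vp\}$ lies in $\{t:|\chi_\pi(t)|>d(1-\vp^2/2)\}$, so \eqref{ab} gives $\log N(\pi,\vp)\ge d^2(1-\vp^2/2)^2/\gamma^2-1$. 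Putting $\vp=1$ into the Sudakov-type bound \eqref{sud'} yields $M_u\ge b_1(d^2/(4\gamma^2)-1)^{1/2}$, hence $M_u\ge(b_1\sqrt3/4)\,d/\gamma$ once $d/\gamma\ge4$; and in any case $M_u\ge\int_{U(d)}|\tr u|\,m_{U(d)}(du)\ge\|\tr u\|_2^3/\|\tr u\|_4^2\ge1/\sqrt2$, using the exact moments $\int|\tr u|^2=1$ and $\int|\tr u|^4\le2$ for Haar measure on $U(d)$. Combining the ranges $d/\gamma\ge4$ and $d/\gamma<4$ gives $M_u\gtrsim d/\gamma$ with a numerical constant, i.e.\ $C_3(\pi)=d/M_u\le K_1 C_2(\pi)$.

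The substantive half is $\cl I(\pi)\le c\,d/\gamma$, i.e.\ $C_2(\pi)\le K C_3(\pi)$, the implication (iii)$\Rightarrow$(ii), which I would obtain by following \cite[Prop.\ 5.3]{Pi2}/\cite{Piri}. What one needs is an anti-concentration lower bound
\begin{equation*}
m_G\big(\{t:\d^\pi(t,1)<\vp\}\big)=m_G\big(\{t:\Re\chi_\pi(t)>d(1-\vp^2/2)\}\big)\ \ge\ c_2\exp\big(-C_2\,d^2(1-\vp^2/2)_+^2/\gamma^2\big)\qquad(0<\vp\le2),
\end{equation*}
with at most a mild extra loss as $\vp\to0$; combined with \eqref{ab} (applied at $\vp/2$) this forces $\log N(\pi,\vp)\lesssim d^2/\gamma^2+1+\log_+(1/\vp)$, whence $\cl I(\pi)=\int_0^2(\log N(\pi,\vp))^{1/2}\,d\vp\lesssim d/\gamma+O(1)\lesssim d/\gamma$ (the last step since $d/\gamma\ge1$). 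Such an estimate is not a consequence of the tail of $\chi_\pi$ by itself: it asserts that $\Re\chi_\pi$ carries real mass near its \emph{maximal} value $d$, i.e.\ that $\bar G$ is genuinely concentrated near the identity in $\d^\pi$. One reaches it through the ``arithmetic'' reformulation of \S\ref{sp1}, $\int|\chi_\pi|^{2n}\,dm_G=\sum_{\sigma\in\hat G}m_\pi(n,0;\sigma)^2\le(\lambda\gamma)^{2n}(2n)^n$ for every $n$, which is the point of \cite{Pi2} at which a subgaussian bound on the single function $\chi_\pi$ gets upgraded to the statement that \emph{every} function in $\operatorname{span}\{\pi_{ij}\mid 1\le i,j\le d\}$ is uniformly subgaussian; a comparison of $\sup_{t\in G}|\tr(g_d\pi(t))|$ with the full Gaussian matrix ensemble (as in the contraction-principle discussion preceding \S\ref{pfs}) then delivers the covering estimate.

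The step I expect to be the main obstacle is precisely this upgrade: promoting the one--dimensional datum ``$\chi_\pi$ is subgaussian'' to the $d^2$--dimensional conclusion about the whole range of $\pi$ --- exactly the density phenomenon flagged as surprising after Theorem \ref{t4}. I do not expect an elementary argument for it and would import it from \cite{Pi2,Piri}; the only genuinely new work is the bookkeeping, i.e.\ checking that $b_1,b_2$ of \cite{MaPi}, $\lambda$ of \eqref{se3}, the $\tr u$--moments used above, and the constant of \cite[Prop.\ 5.3]{Pi2} are all numerical and independent of $G$, $d$ and $\pi$, so that the resulting $K$ is.
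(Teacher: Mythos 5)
Your first half --- the inequality $C_3(\pi)\le K C_2(\pi)$ --- is correct and is essentially the paper's own argument: the subgaussian tail of $\Re\chi_\pi$ bounds $m_G(\{t:\d^\pi(t,1)<\vp\})$ from above, \eqref{ab} turns this into a lower bound on $\log N(\pi,\vp)$, and the Sudakov-type minoration \eqref{sud'} converts that into a lower bound on $M_u$. The only difference is in how the additive constant is absorbed: the paper uses $C_3(\pi)\le d$ and multiplies through by $C_2(\pi)C_3(\pi)/d$, whereas you split into the ranges $d/\gamma\ge 4$ and $d/\gamma<4$ (with $\gamma=\|\chi_\pi\|_{\psi_2}$) and use the moments of $\tr u$ on $U(d)$; both work.

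The second half is where there is a genuine gap, in two respects. First, you do not actually prove $C_2(\pi)\le KC_3(\pi)$: you reformulate it (correctly) as a small-ball lower bound for $m_G(\{\Re\chi_\pi>d(1-\vp^2/2)\})$ and then defer that bound wholesale to \cite{Pi2,Piri}. Since the whole point of the lemma is to extract a uniform numerical two-sided constant, deferring the substantive half is not a proof. Second, the mechanism you sketch for it points the wrong way: the arithmetic identity $\int|\chi_\pi|^{2n}dm=\sum\nl_{\sigma\in\hat G} m_\pi(n,0;\sigma)^2\le(\lambda\gamma)^{2n}(2n)^{n}$ is an \emph{upper} bound on moments, hence (via Markov) an upper bound on the small-ball measure and a \emph{lower} bound on $N(\pi,\vp)$ and on ${\cl I}(\pi)$ --- it can only reprove the direction you already have, never the required upper bound on ${\cl I}(\pi)$. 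The paper's actual route avoids anti-concentration entirely. Set $\Phi(a)=\int_{U(d)}\sup_{g\in G}|\tr(ua\pi(g))|\,m_{U(d)}(du)$. Irreducibility, averaging over $a\mapsto\pi(s)a\pi(s)^{-1}$, convexity and the polar decomposition give inequality \eqref{fe1}, namely $\tr|a|\le C_3(\pi)\Phi(a)$ for all $a\in M_d$: the singleton $\{\pi\}$ is randomly Sidon with constant exactly $C_3(\pi)$. One then dualizes and invokes the duality theorem of \cite[p.~116]{MaPi}, which identifies the dual of $(M_d,\Phi)$, up to a numerical constant, with the Fourier multipliers $L_2(G)\to L_{\psi_2}(G)$; taking $b=a=I$ in \eqref{fe3} yields $\|\chi_\pi\|_{\psi_2}\le KC_3(\pi)$. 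That duality theorem is the one genuinely nontrivial import, and it carries a fixed numerical constant, which is precisely what makes the lemma's $K$ numerical.
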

        \begin{proof} 
Firstly we will show    $K^{-1} C_3(\pi)  \le C_2(\pi)$.
Note that
  $1= (\tr(|u|^2/d))^{1/2} = \|\tr(u\pi)\|_2 \le \|\tr(u\pi)\|_\infty$  for   any fixed $u\in U(d)$, therefore
  $C_3(\pi) \le d.$\\
        Let $0<\vp<\sqrt 2$. Let $C=\|\chi_\pi\|_{\psi_2}$.
   Then  $\|\Re(\chi_\pi)\|_{\psi_2}  \le C$ and hence
   $$ \exp({(d/C)^2(1-\vp^2/2)^2})  m_G(\{ \Re(\chi) > d(1-\vp^2/2)\}) \le  e.$$
  Taking the square root of the log, we find
  $$(d/C) (1-\vp^2/2) \le 1 + \left(\log 1/m_G(\{ \Re(\chi) > d(1-\vp^2/2)\})\right)^{1/2}$$
  and hence using   \eqref{ab}
  $$\le 1 +  \left(\log N(\pi,\vp) \right)^{1/2}.$$
  By   \eqref{sud'} this is
  $$\le 1 +(b_1 \vp)^{-1} \int_{U(d)} \sup\nolimits_{t\in G} | \tr ({u}   \pi(t)) | m_{U(d)}(d{u}) 
\le 1 + (b_1 \vp)^{-1} d/ C_3(\pi).$$ 
Thus we obtain 
$$(d/C_2(\pi)) (1-\vp^2/2) \le 1+ (b_1 \vp)^{-1} d/ C_3(\pi),$$
or equivalently (multiplying by $C_2(\pi)C_3(\pi)/d$)
$$ C_3(\pi) (1-\vp^2/2) \le C_2(\pi)C_3(\pi)/d +(b_1 \vp)^{-1} C_2(\pi) $$
and since $C_3(\pi) \le  d$, 
  choosing say, $\vp=  1$, we obtain
$C_3(\pi) \le  K  C_2(\pi) $
with $K= 2  ( 1+b_1  ^{-1}) $.

We now turn to the converse direction.  
Let $${\Phi}(a)= \int_{U(d)}
 \sup\nolimits_{g\in G} | \tr ({u} a \pi(g)) | m_{U(d)}(d{u}).$$
  We first claim   that for any matrix $a\in M_d$
 \begin{equation}\label{fe1}\tr|a|\le C_3(\pi) {\Phi}(a).\end{equation}
This follows
from a simple averaging argument.  Indeed let $a =u|a|$ be
 the polar decomposition   $(|a|^2=a^*a)$,
then ${\Phi}(a)={\Phi}(|a|)$ so that 
to show \eqref{fe1}
it suffices to show that
$   \tr(a)\le C_3(\pi) {\Phi}(a)$ 
for all  $a$ in $M_d$. Then for any $s\in G$
we have
${\Phi}(a)= {\Phi}( \pi(s) a)$ (by translation in variance over $U(d)$)
and
${\Phi}(a)= {\Phi}(  a\pi(s^{-1}) )$ (by translation in variance over $G$).
By convexity
$${\Phi}(a)= {\Phi}( \pi(s) a\pi(s^{-1}) )\ge {\Phi}( \int \pi(s) a\pi(s^{-1}) m(ds))=  
{\Phi}(\tr(a) I/d)= \tr(a) {\Phi}(I)/d$$
and this gives us \eqref{fe1}, since ${\Phi}(I)= d/C_3(\pi)$ by definition of $C_3(\pi)$.

We now interpret \eqref{fe1} as saying that the norm of a natural inclusion
$J$ between two normed
spaces is at most $C_3(\pi)$: we write $\|J:\ X\to M_d^*\|\le C_3(\pi)$ (the space $X$
is $M_d$ equipped with the norm ${\Phi}$).
By duality
the inequality \eqref{fe1} means that 
 \begin{equation}\label{fe2} \|J^*:\ M_d\to  X^*\|\le C_3(\pi).\end{equation}
By the duality theorem in \cite[p. 116]{MaPi}
  the dual of $X$
can be identified (up to a fixed isomorphic constant)
with the space of Fourier multipliers
from $L_2(G) \to L_{\psi_2}(G)$.
It follows that for any $b\in M_d$
 \begin{equation}\label{fe3} \|J^*(b)\|_{X^*}=  \sup\{ |\tr(ba)|\mid {\Phi}(a)\le 1 \}\simeq \sup\{ \|\tr (ba\pi )\|_{\psi_2}\mid  \|\tr (a\pi )\|_{2} \le 1 \} .\end{equation}
 Since \eqref{fe2} implies
$\|J^*(b)\|_{X^*} \le C_3(\pi) \|b\|_{M_d}$, 
  taking $b=a=I$ we obtain from \eqref{fe3} the announced bound
$$C_2(\pi)=\|\tr ( \pi )\|_{\psi_2} \le  K C_3(\pi).$$
 \end{proof}

\begin{rem} More generally if we work with a subset $\Lambda \subset \hat G$
the duality theorem says that 
the best constant in
  \begin{equation}\label{fe4}\forall a_\pi 
\quad\sum\nl_\Lambda d_\pi \tr |a_\pi| \le C  \E \|\sum\nl_\Lambda d_\pi \tr ( u_\pi  a_\pi \pi)\|_\infty \end{equation}
and
  \begin{equation}\label{fe5} \forall a_\pi 
\quad \|\sum\nl_\Lambda d_\pi \tr (  a_\pi  \pi)\|_{\psi_2} 
\le C \|\sum\nl_\Lambda d_\pi \tr (a_\pi  \pi)\|_{2} \end{equation}
are equivalent.

Moreover  by the same averaging argument
(based on ireducibility of the $\pi$'s)
the best constant in  \eqref{fe4}
is the same if we restrict \eqref{fe4} to the case when the $a_\pi$'s are scalar matrices.
\end{rem}
   \begin{rem} Let $C_1(\pi)$ be the best constant associated to (i)
   in Theorem \ref{t4}. More precisely (this is the Sidon constant of $\{\pi\}$ in the sense of \S \ref{sid}), we define
   $$C_1(\pi)=\sup\{ \tr|a|\mid a\in M_d,\    \sup\nolimits_{g\in G} |\tr (a\pi(g))|\le 1\}.$$
   Obviously $C_3(\pi)   \le C_1(\pi)$. In the converse direction,
   the best known estimate seems to be 
  \begin{equation}\label{fe6}  C_1(\pi)\le K' C_3(\pi)^2 \log(1+C_3(\pi)),\end{equation}
   for some numerical constant K'.
   To check this we first  invoke again the duality
   theorem  in \cite[p. 116]{MaPi}. This implies
   that for any $a\in M_d$ we have
   $$ \|\tr(\pi)\|_{\psi_2} \le K'' C_3(\pi) (\tr(|a|^2))^{1/2},$$
   for some numerical constant $K''$.  
   Then \eqref{fe6}
    can be deduced from the proof of 
    \cite[Th. 3.7]{Pi3} if one takes into account the logarithmic growth 
    described   \cite[Rem. 1.16]{Pi3}.
\end{rem}
   The next statement follows from Theorem  \ref{t4} by the same simple argument 
 already used in the Abelian case in \cite{Pi5}. 
   \begin{cor}\label{cp2} The properties  in Theorem \ref{t4} are equivalent to
   the following ones:
   \item{(v)} There are  numbers $\beta >0$ and $\alpha>0$ such that
   for any $n$ there is a subset $A_n\subset G_n$
   with $|A_n|\ge e^{-1} e^{\alpha  d_n^2}$
   such that 
   $$\forall s\not=t\in A_n\quad
   \|\pi_n(s)-\pi_n(t)\|  > \beta .$$
   
    \item{(v)'} There are  numbers $\beta' >0$ and $\alpha'>0$ such that
   for any $n$ there is a subset $A'_n\subset G_n$
   with $|A'_n|\ge  e^{-1} e^{\alpha'  d_n^2}$
   such that 
   $$\forall s\not=t\in A'_n\quad
   (d_n^{-1/2} \tr|\pi_n(s)-\pi_n(t)|^2)^{1/2}  > \beta'  .$$
      \end{cor}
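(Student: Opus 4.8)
The plan is to route everything through the metric–entropy machinery of \S\ref{pme} together with the two–sided estimates \eqref{pe3'} and \eqref{sud'}; the only substantive point is a reduction to a separated set at a \emph{fixed} scale. First I would note that, by \eqref{pe3'}, condition (iii) of Theorem \ref{t4} is equivalent to the existence of $c>0$ with $\cl I(\pi_n)\ge c\,d_n$ for all $n$. Since $\pi_n(G_n)\subset U(d_n)$, and $U(d_n)$ lies inside the $\d_2$–ball of radius $1$ about $0$ in the $2d_n^2$–dimensional real space $M_{d_n}$, one has the crude estimate $\log N(\pi_n,\vp)\le 2d_n^2\log(5/\vp)$ for $0<\vp\le 2$, whence also $\cl I(\pi_n)=O(d_n)$; so (iii) amounts to $\cl I(\pi_n)\asymp d_n$. (The case where the $d_n$ stay bounded is trivial for all the statements involved — already $\int_{U(d_n)}\sup_t|\tr(u\pi_n(t))|\,m_{U(d_n)}(du)\ge\int_{U(d_n)}|\tr u|\,m_{U(d_n)}(du)>0$ is bounded below — so one may assume $d_n\to\infty$.)

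Next I would show that $\cl I(\pi_n)\ge c\,d_n$ for all $n$ is equivalent to: there are fixed $\vp_0>0$ and $\alpha_0>0$ with $\log N(\pi_n,\vp_0)\ge\alpha_0 d_n^2$ for all $n$. One direction is \eqref{sud}, namely $\cl I(\pi_n)\ge\vp_0(\log N(\pi_n,\vp_0))^{1/2}$. For the other, split $\cl I(\pi_n)=\int_0^{\vp_0}+\int_{\vp_0}^2$; the crude bound makes the first term at most $\sqrt2\,d_n\int_0^{\vp_0}(\log(5/\vp))^{1/2}d\vp$, which is $<\tfrac12 c\,d_n$ once $\vp_0$ is small enough, so the tail is $\ge\tfrac12 c\,d_n$; being $\le 2(\log N(\pi_n,\vp_0))^{1/2}$, it forces $\log N(\pi_n,\vp_0)\ge(c\,d_n/4)^2$. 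Now a maximal subset $A_n'\subset G_n$ with pairwise $\d^{\pi_n}$–distances $\ge\vp_0$ is a $\vp_0$–net, so $|A_n'|\ge N(\pi_n,\vp_0)\ge e^{\alpha_0 d_n^2}\ (\ge e^{-1}e^{\alpha_0 d_n^2})$, and $\d^{\pi_n}(s,t)\ge\vp_0>\vp_0/2$ for $s\neq t$: this is (v)$'$ with $\beta'=\vp_0/2$ (read for the metric $\d^{\pi_n}(s,t)=(d_n^{-1}\tr|\pi_n(s)-\pi_n(t)|^2)^{1/2}$, the quantity displayed there). Conversely, given (v)$'$ with parameters $\beta',\alpha'$, the set $A_n'$ is $\d^{\pi_n}$–separated at scale $\beta'$, so no open $\d^{\pi_n}$–ball of radius $\beta'/2$ contains two of its points, giving $N(\pi_n,\beta'/2)\ge e^{-1}e^{\alpha' d_n^2}$; plugging this into \eqref{sud'} yields $\int_{U(d_n)}\sup_{t\in G_n}|\tr(u\pi_n(t))|\,m_{U(d_n)}(du)\ge b_1(\beta'/2)(\alpha' d_n^2-1)^{1/2}\gtrsim d_n$, i.e. condition (iii). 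Finally, (v)$'$ $\Rightarrow$ (v) is immediate from \eqref{pe9}: $\|\pi_n(s)-\pi_n(t)\|\ge\d_2(\pi_n(s),\pi_n(t))=\d^{\pi_n}(s,t)>\beta'$, so the same $A_n'$ serves in (v) with $\beta=\beta'$.

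The hard part is the remaining implication (v) $\Rightarrow$ (iii). Here the operator norm and the metric $\d^{\pi_n}$ may differ by a factor as large as $\sqrt{d_n}$, so feeding an operator-norm $\beta$–separated set of size $e^{\alpha d_n^2}$ directly into \eqref{sud'} only gives $\int_{U(d_n)}\sup_t|\tr(u\pi_n(t))|\,m_{U(d_n)}(du)\gtrsim\sqrt{d_n}$, which is far short of (iii). The plan to close this gap is to exploit that $A_n$ sits inside a group: pass to the difference set $A_n^{-1}A_n$, on which $\|1-\pi_n(g)\|>\beta$ for $g\neq e$, and thin $A_n$ to a subset that is $\d^{\pi_n}$–separated at a \emph{fixed} scale while still of size $e^{\Omega(d_n^2)}$; the input needed for the thinning is that $\d^{\pi_n}$–balls of radius bounded away from $\sqrt2$ have Haar measure $e^{-\Omega(d_n^2)}$ in $U(d_n)$ — a statement of exactly the flavour of Remark \ref{pr1}, provable from the large-deviation behaviour of $\tr$ on $U(d_n)$. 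Carrying out this thinning without destroying the exponential cardinality (a naive volumetric argument is too lossy — it drops a factor $d_n^{\Theta(d_n^2)}$ precisely because of the $\sqrt{d_n}$ discrepancy between the two norms) is, I expect, the main obstacle; once such a fixed-scale $\d^{\pi_n}$–separated set of size $e^{\Omega(d_n^2)}$ is in hand, the argument of the previous paragraph returns (iii), closing the cycle (iii) $\Rightarrow$ (v)$'$ $\Rightarrow$ (v) $\Rightarrow$ (iii).
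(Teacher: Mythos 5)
Your treatment of the cycle (iii) $\Rightarrow$ (v)$'$ $\Rightarrow$ (v) is correct, and your route into (v)$'$ is a legitimate variant of the paper's: you extract the fixed-scale entropy bound $\log N(\pi_n,\vp_0)\gtrsim d_n^2$ from ${\cl I}(\pi_n)\gtrsim d_n$ by splitting the Dudley integral at $\vp_0$ and using the crude volumetric bound $\log N(\pi_n,\vp)\le 2d_n^2\log(C/\vp)$ near $0$, whereas the paper obtains the same lower bound on $N(\pi_n,\vp)$ directly from the subgaussian tail estimate (ii) together with \eqref{ab}. Your return trip (v)$'$ $\Rightarrow$ (iii) via \eqref{sud'} is exactly the paper's, and (v)$'$ $\Rightarrow$ (v) is indeed immediate from \eqref{pe9}.

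However, the implication (v) $\Rightarrow$ (v)$'$ (equivalently (v) $\Rightarrow$ (iii)) is left unproved, and you say so yourself; since this is the only substantive content of the corollary beyond Theorem \ref{t4} and the entropy formalism, the proposal has a genuine gap. What is needed --- and what your ``thinning'' reduces to once you run the greedy deletion argument --- is precisely the following: a $\d_2$-ball of fixed radius in $M_d$ contains at most $e^{\xi d^2}$ points that are $\beta$-separated in operator norm, with the threshold depending on $\xi$ and $\beta$ but not on $d$; equivalently, the normalized Hilbert--Schmidt ball $B_d$ is covered by $e^{\xi d^2}$ operator-norm balls of radius $r_\xi$ independent of $d$. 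This is the Sublemma in the paper's proof, obtained from the dual Sudakov inequality \cite[Cor.\ 5.12]{Piv} combined with $\E\|g_d\|=O(1)$; the gain over the volume comparison (your $d^{\Theta(d^2)}$ loss) comes exactly from the fact that the operator norm of the normalized Gaussian matrix $g_d$ is $O(1)$ while its Hilbert--Schmidt norm is of order $\sqrt{d}$. With this in hand, \eqref{ep5} converts the operator-norm-separated set $A_n$ of (v) into the lower bound $N(\pi_n(G_n),(\beta/2r_\xi)B_d)\ge |A_n|\,e^{-\xi d_n^2}$, and taking $\xi=\alpha/2$ and a maximal separated set yields (v)$'$. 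Your proposed substitute --- difference sets plus the fact that Haar measure of fixed-radius $\d_2$-balls in $U(d_n)$ is $e^{-\Omega(d_n^2)}$ --- does not close the gap: that measure estimate controls packings of all of $U(d_n)$, not the number of operator-norm-separated points inside a \emph{small} $\d_2$-ball, which is the quantity your thinning actually requires.
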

       \begin{proof} 
       We will first show that (i) and (ii) in Theorem \ref{t4} are equivalent
       to {(v)'}. 
     This is an easy consequence of the subgaussian estimate (ii)
      and of \eqref{ab}. Indeed, let $\pi=\pi_n$, $G=G_n$.
  Recall 
${\d}^\pi(t,1)^2= 2  ( 1-d_\pi^{-1} \Re \chi_\pi(t))$. Therefore
(ii) implies assuming $\vp<\sqrt2$
 \begin{equation}\label{ep6}  {m_G(\{t\mid {\d}^{\pi}(t,1)<\vp   \})}=
{m_G(\{t\mid   \Re \chi_\pi(t)>d_\pi(1-\vp^2/2)\})}\le e e^{ -\gamma d_\pi^2}\end{equation}
where $\gamma= \beta (1-\vp^2/2)^2 $. From this follows by \eqref{ab}
$$N({\pi} ,\vp   )\ge 
{m_G(\{t\mid   \Re \chi_\pi(t)>d_\pi(1-\vp^2/2)\})}^{-1} \ge 
e^{-1} e^{ \gamma d_\pi^2}.$$
 Let $A\subset G$      be a maximal subset of
 points such that 
 $$\forall s\not=t\in A\quad \d^\pi(s,t)=  (d^{-1} \tr|\pi(s)-\pi(t)|^2)^{1/2} > \vp  /2.$$
 Clearly, $|A|\ge N({\pi} ,\vp   )$.
 Therefore, (v)' follows with  $\beta'=\vp/2$ (which can be any number $<1/\sqrt 2$),
 and $\alpha'=\gamma^{1/2}  $.
 This shows (ii) implies (v)'. Conversely, assume  (v)'.
 We will show that (iii) holds.
 Indeed, (v)' implies a lower bound
 $N(\pi,  \beta'/2) \ge |A_n|\ge  e^{-1} e^{\alpha'  d_n^2}$,
 and plugging this into \eqref{pe3'}  and
 \eqref{sud}
 we immediately derive (iii).
 
To complete the proof
we will show that (v) and (v)'
are equivalent.  Clearly (v)' implies (v). For the converse, we will use the following non-commutative analogue of a result  from approximation theory (see e.g. \cite{Ca}).
\begin{slem} Let $B_d =\{x\in M_d\mid  (d ^{-1/2}\tr|x|^2)^{1/2} \le 1\}$.
For any $\xi>0$ there is a constant $r_\xi$ such that,  
for any $d$, we have
 $$N(B_d , \d_\infty,r_\xi)\le  \exp{(\xi d^2)}.$$
\end{slem} 
To prove the sublemma,   given subsets $K_1,K_2$ of $M_d$ let us denote
for $\vp>0$
by $N(K_1,\vp K_2)$ the smallest number 
of a covering of $K_1$ by translates of $\vp K_2$.
If $K_3$ is another set,   obviously we note for later use that for any $r>0$
 \begin{equation}\label{ep5} 
 N(K_1,\vp r K_2)\le N(K_1,\vp   K_3) N( \vp   K_3 ,\vp r K_2)
 =N(K_1,\vp   K_3) N(     K_3 ,   r K_2)
  .\end{equation}
Let ${\cl B}_d$ be the unit ball of $M_d$ (equipped with the operator norm).
Note ${\cl B}_d \subset {B}_d$. The sublemma
is clearly equivalent to the claim that for any $\xi>0$
there is   $r_\xi$  such that
$N(B_d, r_\xi {\cl B}_d) \le \exp{(\xi d^2)}$.
There are many possible proofs of the latter.
We choose one for which we have the references at hand.
By \cite[Cor. 5.12 p. 80]{Piv} (up to a change of notation)
there is an absolute constant $C$ such that
$$\sup\nl_{\vp>0} \vp (\log N(B_d, \vp {\cl B}_d) 
\le C d\  \E\|g_d \| .$$
Since, as we already mentioned,  $ \E\|g_d \|  $ remains bounded when $d\to \infty$
(see e.g. \cite[p. 78]{MaPi}) we may modify the absolute  constant $C$
so that 
$$\sup\nl_{\vp>0} \vp (\log N(B_d, \vp {\cl B}_d) 
\le C d  .$$
Then, choosing $\vp=C/\xi^{1/2}$, we find as announced
$N(B_d, r_\xi {\cl B}_d) \le \exp{(\xi d^2)}$
with $r_\xi= C/\xi^{1/2}$, completing the proof of the sublemma.\\
We now show that  (v) $\Rightarrow$ (v)'.
 Assume (v). 
 Again we set $G=G_n, \pi=\pi_n$ and $d=d_\pi$.
 Then (v) implies
 $  |A_n| \le N(\pi(G), (\beta/2) {\cl B}_d) $.
By \eqref{ep5} we have for any $r>0$
$$|A_n| \le N(\pi(G), (\beta/2r) { B}_d) 
N({ B}_d,  r {\cl B}_d)  .$$
Choosing $r=r_\xi$ gives us
$$ {|A_n|}\exp{-(\xi d^2)} \le N(\pi(G), (\beta/2r_\xi) { B}_d) ,$$
then choosing (say) $\xi= \alpha/2$
we obtain  
$ e^{-1} e^{\alpha  d^2/2} \le N(\pi(G), (\beta/2r_\xi) { B}_d) .$
From this considering as usual a maximal set of points
$A'\subset G$ such that
$(d ^{-1/2}\tr|\pi(s)-\pi(t)|^2)^{1/2}
 > \beta/4r_\xi$  for all  $s\not=t\in A'$,
we have necessarily $|A'|\ge N(\pi(G), (\beta/2r_\xi) { B}_d)$.
Thus we obtain (v)' with $\beta'= \beta/4r_{\alpha/2}$.
        \end{proof}
        
         \begin{lem}\label{tb2} Let $G$ be a group with an Abelian subgroup $\Gamma$
  of index $k<\infty$.
  Let $\pi:\ G \to U(d)$ be a unitary representation.
  Then
 \begin{equation}\label{e5}\EE\sup\nl_{t\in G}|\tr(g_d\pi(t))|\le  \sqrt{2\log k}+ \sqrt{d}.\end{equation}
 \end{lem}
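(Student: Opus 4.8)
The plan is to reduce the supremum over the (possibly infinite) group $G$ to a supremum over the finitely many cosets of $\Gamma$, and then to handle each coset by a direct variance/entropy computation plus a maximal inequality over the $k$ cosets. First I would fix a coset decomposition $G=\bigcup_{j\le k} t_j\Gamma$ and write
$$\sup\nl_{t\in G}|\tr(g_d\pi(t))|=\max\nl_{j\le k}\ \sup\nl_{\gamma\in\Gamma}|\tr(g_d\pi(t_j)\pi(\gamma))|.$$
The key structural input is that $\Gamma$ is Abelian, so $\pi|_\Gamma$ is simultaneously diagonalizable: after conjugating, $\{\pi(\gamma)\mid\gamma\in\Gamma\}$ lies in the diagonal torus $D_d$, and hence $\pi(t_j)\pi(\gamma)$ ranges over a subset of $\{\pi(t_j)\operatorname{diag}(z_1,\dots,z_d)\mid z_i\in\T\}$. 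For a fixed unitary $v=\pi(t_j)$, the quantity $\sup_{z\in\T^d}|\tr(g_d\, v\,\operatorname{diag}(z))|$ is dominated by $\sum_{i=1}^d |(g_d v)_{ii}|$, i.e. the $\ell_1$-norm of the diagonal of $g_d v$. So for each $j$ we get a clean bound $\sup_{\gamma\in\Gamma}|\tr(g_d\pi(t_j)\pi(\gamma))|\le \sum_{i=1}^d|(g_d\pi(t_j))_{ii}|$.

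Next I would estimate $\E\max_{j\le k}\sum_{i=1}^d|(g_d\pi(t_j))_{ii}|$. For each fixed $j$, the entries $(g_d\pi(t_j))_{ii}$ are (complex) Gaussian with $\E|(g_d\pi(t_j))_{ii}|^2 = d^{-1}\sum_\ell |\pi(t_j)_{\ell i}|^2 = 1/d$ (unitarity of the column), so by Cauchy–Schwarz $\E\sum_i |(g_d\pi(t_j))_{ii}| \le \sqrt d\,\big(\E\sum_i|(g_d\pi(t_j))_{ii}|^2\big)^{1/2} = \sqrt d$, with the same bound for the second moment up to constants. To turn a per-$j$ bound into a bound on the max over $k$ values with only a $\sqrt{\log k}$ loss, I would use a standard subgaussian maximal inequality: writing $Y_j:=\sum_i|(g_d\pi(t_j))_{ii}|$, one centers at $\E Y_j\le\sqrt d$ and controls the $\psi_2$-norm of $Y_j-\E Y_j$ (this random variable is a Lipschitz function, with constant $O(d^{-1/2})\cdot\sqrt d = O(1)$ after accounting for the $\sqrt d$ terms — more precisely $\operatorname{Lip}$ in the Euclidean metric of the entries is of order $1$, giving Gaussian concentration with variance proxy $O(1)$), whence $\E\max_{j\le k}(Y_j-\E Y_j)\le c\sqrt{\log k}$. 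Combining, $\E\max_j Y_j\le \sqrt d + c\sqrt{\log k}$. The constants would then need to be tracked to land on exactly $\sqrt{2\log k}+\sqrt d$; the sharp constant $2$ comes from the Gaussian tail $\P(|N(0,1)|>t)\le e^{-t^2/2}$ together with the observation that the relevant Lipschitz constant / variance proxy is exactly $1$, so no slack is wasted.

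The main obstacle I anticipate is getting the \emph{clean} constant $\sqrt{2\log k}+\sqrt d$ rather than $C(\sqrt{\log k}+\sqrt d)$: this forces one to avoid crude union bounds and instead use either (a) a sharp Gaussian concentration inequality for the $1$-Lipschitz functional $g_d\mapsto Y_j$ with the precise variance proxy, or (b) a direct argument identifying $Y_j$ in distribution with $\sum_i|\zeta_i|$ for suitable Gaussians $\zeta_i$ and using sharp moment bounds. I would expect option (a) to be cleanest: the functional $g\mapsto \sup_{t\in G}|\tr(g\pi(t))|$ is itself $1$-Lipschitz from $(M_d,\d_2\cdot\sqrt d)=(M_d,\|\cdot\|_{HS})$ to $\R$ — indeed $|\tr(g\pi(t))-\tr(g'\pi(t))|\le \|g-g'\|_{HS}\|\pi(t)\|_{HS}=\sqrt d\,\|g-g'\|_{HS}$ is too lossy, so one instead works entrywise with the diagonal functionals, each of which has Lipschitz constant $1$ in the $\|\cdot\|_{HS}$ metric of $g$ (since the diagonal of $gv$ depends on $g$ linearly with operator of norm $\le 1$). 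Then Borell–TIS applied to $\max_j Y_j$, with median or mean $\le\sqrt d$ and variance proxy $1$, yields precisely the stated bound after taking expectations and using $\E\max_j N_j\le\sqrt{2\log k}$ for standard normals. The remaining routine checks are the diagonalization of $\Gamma$, the $\ell_1$-domination, and the elementary moment bound $\E Y_j\le\sqrt d$.
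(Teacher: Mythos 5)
Your proposal is correct and follows essentially the same route as the paper: coset decomposition, simultaneous diagonalization of $\pi|_\Gamma$ (using unitary invariance of the Gaussian matrix to reduce to diagonal entries), the $\ell_1$-domination $Y_j\le\sum_i|(g_d)_{ii}|$ giving $\E Y_j\le\sqrt d$, and Gaussian Lipschitz concentration with constant $1$ plus a union/exponential-moment bound yielding $\E\max_j(Y_j-\E Y_j)\le\sqrt{2\log k}$. No substantive differences to report.
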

   \begin{proof}
   Up to an extra constant factor,
   this can be easily derived from 
   Lemma \ref{el1} and \eqref{pe3} by plugging the estimate
of Lemma   \ref{el1}    into  the upper bound of \eqref{pe3}.
   We give a direct proof for the convenience of the reader.
    Let $G=\cup_{j\le k} t_j \Gamma$ be the disjoint decomposition into cosets.
   Let $$Y_j=\sup_{t\in t_j\Gamma}|\tr(g_d \pi(t))|=
   \sup_{\gamma\in \Gamma} |\tr(g_d \pi(t_j) \pi(\gamma)|.$$
   Then $$\sup_{t\in G}|\tr(g_d\pi(t))|=\sup_j Y_j.$$
   Since $\{\pi(\gamma)\mid \gamma \in \Gamma\}$ are commuting unitary matrices, they are simultaneously diagonalizable, i.e. $\exists V\in U(d)$
   such that $\pi (\gamma)= V D(\gamma)V^{-1}$ where $D(\gamma)$
   is a diagonal matrix. Then
   $\tr(g_d \pi(t_j) \pi(\gamma)=\tr(V^{-1}g_d  \pi(t_j) V D(\gamma))$.
   Moreover, $V^{-1}g_d  \pi(t_j) V \overset{\rm dist} {=}  g_d$.
   Therefore, $Y_j \overset{\rm dist} {=} W(g_d)$
   where
   $$W(g_d)=   \sup_{\gamma\in \Gamma} |\tr(g_d D(\gamma))|=\sup_{\gamma\in \Gamma} |\sum g_d(i,i) D_{i,i}(\gamma)| \le \sum |g_d(i,i)|.$$
The function $[x_d(i,j)]\mapsto W(x_d/\sqrt {d})$ is Lipschitz
   on $\C^{d^2}=\R^{2d^2}$ (equipped with the Euclidean norm) with distortion $ \le 1$. Therefore (see \cite[p. 181]{Pip} 
   or  \cite{Le,LeTa})
   for any $\lambda \in \RR$ 
   $$\EE \exp{ \lambda (W-\EE W)}\le   \exp{(\lambda^2/2)}.$$
   Now
   $$\EE \exp{\sup_j  \lambda (Y_j-\EE Y_j)} \le \sum_j \EE \exp{ \lambda (Y_j-\EE Y_j)}\le k   \exp{(\lambda^2/2)}.$$
   A fortiori by convexity for any $\lambda >0$
   $$ \exp{ ( \lambda\EE\sup_j (Y_j-\EE Y_j))} \le k   \exp{\lambda^2/2}.$$
   Let $R=\EE\sup_j (Y_j-\EE Y_j)$.
   We have $ \exp{  \lambda R-\lambda^2/2} \le  k  $
   and hence (take $\lambda=R$)
   $$R \le \sqrt{ 2 \log k  }.$$
   Clearly
    $$ \EE\sup_j Y_j \le \EE\sup_j (Y_j-\EE Y_j)+ \sup_j \EE Y_j\le
    R+ \EE\sum |g_d(i,i)|\le R+d\EE|g_d(1,1)|\le R+\sqrt d.$$
    From this the announced result follows.
   \end{proof} 
     
     \begin{lem}\label{lp2} In the situation of Lemma \ref{tb2}, we have
     $$\forall \vp<\sqrt 2\quad 
     m_G(\{\Re\chi_\pi > d(1-\vp^2/2)\}) \ge k^{-1} (\vp/2\pi)^d,$$
     and
    \begin{equation}\label{e6}  \|\chi_\pi\|_{\psi_2}\ge 
    c'  \min\{\sqrt{d},  \frac{d}{\sqrt{\log k} } \}, \end{equation}
     where $c'>0$ is a numerical constant.
    \end{lem}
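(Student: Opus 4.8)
The plan is to \emph{squeeze} $\|\chi_\pi\|_{\psi_2}$ between an entropy‑type lower bound for a level set of $\Re\chi_\pi$ and the elementary sub‑Gaussian (Markov) upper bound; the displayed measure estimate is proved first and is the source of the lower bound.

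For the measure estimate I would begin from \eqref{pe10}, which identifies $\{t:\d^\pi(t,1)<\vp\}$ with $\{\Re\chi_\pi>d(1-\vp^2/2)\}$, so the set in the statement is exactly the open $\d^\pi$‑ball of radius $\vp$ about the identity. By the left‑hand inequality of \eqref{ab} its $m_G$‑measure is at least $N(\pi,\vp)^{-1}$, so it suffices to bound $N(\pi,\vp)$ from above. Now $\d^\pi$ is the pullback under $\pi$ of the normalized Hilbert–Schmidt distance $\d_2$, so covering $G$ by $\d^\pi$‑balls is the same as covering the subgroup $\pi(G)\subset U(d)$ by $\d_2$‑balls; since $\pi(G)$ contains the Abelian subgroup $\pi(\Gamma)$ of index at most $k$, Lemma \ref{el1} yields $N(\pi,\vp)\le k(2\pi/\vp)^d$, hence $m_G(\{\Re\chi_\pi>d(1-\vp^2/2)\})\ge k^{-1}(\vp/2\pi)^d$.

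To prove \eqref{e6}, put $C=\|\chi_\pi\|_{\psi_2}$. Since $|\Re\chi_\pi|\le|\chi_\pi|$, the definition of the $\psi_2$‑norm gives $\int\exp((\Re\chi_\pi/C)^2)\,dm_G\le e$, and Markov's inequality then yields $m_G(\{\Re\chi_\pi>\lambda\})\le e^{\,1-\lambda^2/C^2}$ for every $\lambda>0$. Taking $\lambda=d(1-\vp^2/2)$ and comparing with the lower bound just proved gives
$$k^{-1}(\vp/2\pi)^d\ \le\ e^{\,1-d^2(1-\vp^2/2)^2/C^2},$$
so, taking logarithms and solving for $C^2$,
$$C^2\ \ge\ \frac{d^2(1-\vp^2/2)^2}{1+\log k+d\log(2\pi/\vp)}.$$
I would then specialize to $\vp=1$ (the particular value is immaterial) and use $1+\log k+d\log(2\pi)\le(2+\log 2\pi)\max(d,\log k)$, valid for $d\ge1$ (the case $d=0$ being trivial), to conclude that $C^2$ is bounded below by a numerical multiple of $d^2/\max(d,\log k)=\min\{d,\,d^2/\log k\}$, which is precisely \eqref{e6} with an explicit value of $c'$.

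The argument is largely bookkeeping, so there is no serious obstacle; the one place that needs a word of care is the reduction ``$N(\pi,\vp)\le k(2\pi/\vp)^d$'', since the $\d^\pi$‑balls covering $G$ must be centred on $G$ whereas Lemma \ref{el1} bounds the covering number with unrestricted centres. This costs at most a factor $2$ in the radius, i.e. replaces $2\pi$ by $4\pi$, which is absorbed harmlessly into the final constant $c'$; one must also keep the hypothesis $\vp<\sqrt2$ in force throughout, both to identify the level set with a $\d^\pi$‑ball and to keep $\lambda=d(1-\vp^2/2)$ positive in the Markov step.
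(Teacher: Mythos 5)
Your proof is correct and follows essentially the same route as the paper: the measure lower bound comes from Lemma \ref{el1} combined with \eqref{ab} (the paper likewise notes the center-restriction/radius-doubling issue via $N(\pi,2\vp)\le N(\pi(G),\d_2,\vp)\le N(\pi,\vp)$, losing only a harmless constant), and \eqref{e6} is obtained exactly as you do, by playing the subgaussian tail bound $m_G(\{\Re\chi_\pi>s\})\le e\exp(-s^2/\|\chi_\pi\|_{\psi_2}^2)$ against that lower bound at $s=d(1-\vp^2/2)$ for a fixed $\vp$ (the paper takes $\vp=1/2$ after the doubling, you take $\vp=1$; immaterial). Your explicit bookkeeping of the constant $c'$ via $\max(d,\log k)$ is a slightly more detailed version of the paper's ``simple calculation.''
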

    \begin{proof}  Going back to the definitions, 
 we find that $ N(\pi,\vp)$ is essentially the same as $N(\pi(G), \d_2,\vp)$,
 but using only balls   centered
 in $\pi(G)$.
Thus   $N(\pi,2\vp)\le N(\pi(G), \d_2,\vp) \le N(\pi,\vp)$.
By Lemma \ref{el1} and \eqref{ab}
  $$m_G(  \{ \Re\chi_\pi >1-2\vp^2\}) \ge k^{-1}(\vp/2\pi)^d.$$
   Let $r=\|\chi_\pi\|_{\psi_2}$. Note $\|\Re\chi_\pi\|_{\psi_2}\le r$.
    We have for any $s>0$
    $$m\{ \Re\chi_\pi >s\}\le e \exp{-(s^2/r^2)},$$
    and hence with $s=d(1-2\vp^2)$
    we find
    $$k^{-1}(\vp/2\pi)^d\le e \exp{-(d^2(1-2\vp^2)^2/r^2)}.$$
    Choose say $\vp=1/2$. Then 
    a simple calculation leads
    to the announced lower bound for $r$.
     \end{proof}
   \begin{thm}\label{t10}
   If $G$ is finite or amenable (as a discrete group)
   then for any $d>1$ and any representation
   $\pi:\ G\to U(d)$  we have
   \begin{equation}\label{pe7-}\int_{U(d)} \sup\nolimits_{g\in G} | \tr ({u}  \pi(g)) | m_{U(d)}(d{u})\le c_1 \sqrt{d\log(d) }\end{equation}
   and
   \begin{equation}\label{pe7}  \|\chi_{ \pi}\| _{{{\psi_2}  }} \ge c_2 \sqrt{d/\log(d)} ,\end{equation}
    where $c_1$ and $c_2$ are positive constants independent of $d$.
         \end{thm}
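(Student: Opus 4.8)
The plan is to combine Jordan's theorem---in the form of Theorem~\ref{b1} when $G$ is finite, and of Proposition~\ref{tb} when $G$ is amenable as a discrete group---with the two probabilistic estimates already recorded in Lemmas~\ref{tb2} and~\ref{lp2}, which are stated exactly in the situation of a group possessing an Abelian subgroup of finite index. So first I would fix $d>1$ and $\pi:G\to U(d)$, and invoke Theorem~\ref{b1} (resp.\ Proposition~\ref{tb}) to produce an Abelian normal subgroup $\Gamma\le G$ of finite index $k$; for $d\ge 71$ Collins' bound \cite{collins} gives $k\le (d+1)!$, and by Stirling there is an absolute constant $C_0$ with $\log k\le \log((d+1)!)\le C_0\,d\log d$ for all such $d$. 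For the finitely many remaining values $2\le d\le 70$ one still has $k\le f(d)<\infty$ for some Jordan bound $f(d)$, so both \eqref{pe7-} and \eqref{pe7} hold there for trivial reasons: the left side of \eqref{pe7-} is at most $d$, while the left side of \eqref{pe7} is bounded below by a positive constant via Lemma~\ref{lp2}; these cases can be absorbed into $c_1,c_2$ at the end. From now on assume $d\ge 71$.

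To obtain \eqref{pe7-}: Lemma~\ref{tb2}, applied to $G\supset\Gamma$, gives
$$\E\sup\nl_{t\in G}|\tr(g_d\pi(t))|\le \sqrt{2\log k}+\sqrt d\le \sqrt{2C_0\,d\log d}+\sqrt d,$$
and since $d\ge 2$ the right-hand side is at most $c\sqrt{d\log d}$ for an absolute constant $c$. It then remains only to replace the Gaussian average $M_g=\E\sup_t|\tr(g_d\pi(t))|$ by the unitary average $M_u=\int_{U(d)}\sup_t|\tr(u\pi(t))|\,m_{U(d)}(du)$; but these are equivalent up to absolute constants, e.g.\ by combining \eqref{pe3} and \eqref{pe3'}, or directly by the contraction-principle argument given just after \eqref{pe3'} (which yields $b\,M_u\le M_g$). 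Hence $M_u\le b^{-1}c\sqrt{d\log d}$, which is \eqref{pe7-} with $c_1=b^{-1}c$.

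To obtain \eqref{pe7}: feed the same index bound into the estimate \eqref{e6} of Lemma~\ref{lp2}, namely
$$\|\chi_\pi\|_{\psi_2}\ge c'\min\Big\{\sqrt d,\ \frac{d}{\sqrt{\log k}}\Big\}.$$
Since $\log k\le C_0\,d\log d$ we have $d/\sqrt{\log k}\ge \sqrt{d/(C_0\log d)}$, and the latter is $\le\sqrt d$ for all $d$ large enough; hence the minimum is at least $c''\sqrt{d/\log d}$ for an absolute constant $c''$, and \eqref{pe7} follows with $c_2=c'c''$ (shrinking $c_2$ if necessary to cover the small-$d$ range treated above).

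The argument is therefore essentially bookkeeping: all the genuinely analytic content---concentration of measure for Lipschitz functions of Gaussian matrices, together with the covering estimate of Lemma~\ref{el1}---is already packaged inside Lemmas~\ref{tb2} and~\ref{lp2}, and the group-theoretic content is Jordan's theorem and its amenable extension. I expect the only step needing real care to be making the constants uniform in $d$, in particular handling the low-dimensional range $2\le d\le 70$ where Collins' sharp bound $(d+1)!$ is not available but some finite Jordan bound still makes both inequalities hold trivially; the sharpness of the orders $\sqrt{d\log d}$ and $\sqrt{d/\log d}$ (asserted in the introduction) would come separately from the case study in Remark~\ref{pr1}.
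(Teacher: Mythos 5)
Your proposal is correct and follows exactly the paper's own route: the paper proves Theorem~\ref{t10} by citing Proposition~\ref{tb} together with \eqref{e5} and \eqref{e6} applied with $k=(d+1)!$, which is precisely your combination of Jordan's theorem with Lemmas~\ref{tb2} and~\ref{lp2}. Your extra bookkeeping (Stirling, the $M_g$ versus $M_u$ equivalence, and the low-dimensional range $d\le 70$) just makes explicit what the paper leaves implicit.
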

    \begin{proof}  This follows
    from Proposition \ref{tb} and
      \eqref{e5}
    and  \eqref{e6} applied with $k=(d+1)!$.
     \end{proof}
     \begin{rem} The  proof of \eqref{pe7} is similar to but simpler than
      the one used  by Hutchinson for profinite groups in \cite{Hut},
      but since he used a weaker bound for $f(d)$ his estimate is weaker.
      
     \end{rem}
       \begin{rem} The estimates \eqref{pe7-} and  \eqref{pe7}
       are asymptotically optimal. This can be seen
       by considering the same case study $\cl G\subset U(d)$ as in
       Remark \ref{pr1}. Indeed,
       let  
       $Z_0=\sup\nl_{\sigma\in S(d)} |\sum\nl_{1}^d g_d(i,\sigma(i))|$.
       We have
      $$   Z_0\le \sup\nolimits_{v\in \cl G} | \tr ({u}   v ) | .$$
      Let  $V_\sigma=\sum\nl_{1}^d g_d(i,\sigma(i))$.
     Now if $\sigma, \sigma'\in S(d)$ differ
     on exactly $k$ places we have
     $$ \|V_\sigma-V_{\sigma'}\|_2^2=2k/d.$$
     Therefore if $r(\sigma,\sigma')$ denotes the number
     of $i$'s where $\sigma(i)\not=\sigma'(i)$  
     and if $u_\sigma$ denotes the unitary matrix
     associated to $\sigma$, we have
     $$\|V_\sigma-V_{\sigma'}\|_2 =(2r(\sigma,\sigma')/d)^{1/2}=\d_2(u_\sigma,u_{\sigma'}).$$
     Note that $r(\sigma,1) = d-j$ iff $\sigma$ has  exactly $j$ fixed points.\\
    We claim that the Sudakov minoration implies
    $\E Z_0\ge c_3 \sqrt{d \log d}$ for some $c_3>0$ independent of $d$.
        Indeed, applying \eqref{pe29}
        to the copy of $S(d)$ formed by the subgroup $G_S=\{u_\sigma\mid \sigma \in S(d)\}\subset U(d)$,  we find
        $$1/ m_{S(d)} (\{\sigma \in S(d)\mid  (2r(\sigma,1)/d)^{1/2} <\vp\} )\le N(G_S,\d_2,\vp).$$
        By \eqref{pe12}
        $$m_{S(d)} (\{\sigma \in S(d)\mid  (2r(\sigma,1)/d)^{1/2} <\vp\} )=
        \sum\nl_{j >d(1-\vp^2 /2)} X_j/d!\le \sum\nl_{j > d(1-\vp^2 /2)}  1/ j! .$$
        Note that for any $1\le k\le d$
        \begin{equation}\label{pe13}  \sum\nl_{j > k-1}  1/ j! \le e/ k!  \le e (e/k)^k.\end{equation}
        Taking e.g. $\vp=1$ and $k-1=[d/2]$, 
        the latter sum is $\le  e (2e/d)^{d/2}$ thus 
        $$ e^{-1}(d/2e)^{d/2} \le N(G_S,\d_2, 1).$$
        Note that
        $$\sup\nl_{u\in G_S} |\tr(g_d u) |=\sup\nl_\sigma|V_\sigma|=Z_0,$$
        and hence by \eqref{pe3} and  \eqref{sud}
$$   (b_1/\sqrt{2e} ) \sqrt{d \log d}\approx   b_1 \sqrt{ \log ( e^{-1}(d/2e)^{d/2}) } \le \E Z_0 . $$
This proves our claim and a fortiori
that $ \E\sup\nolimits_{v\in {\cl G}} | \tr (g_d   v) |\ge c_3 \sqrt{d \log d}$.
By the equivalence of $M_g$ and $M_u$ observed after \eqref{pe3'} this   proves 
that \eqref{pe7-} is optimal.

We now turn to \eqref{pe7}. For any  $u=\sum\nl_1^d \vp_i e_{i,\sigma(i)} \in\cl G$, let
$\chi(u) =\tr(u)=  \sum\nl_{ i\in {\rm Fix}(\sigma)}  \vp_i$.
Then for any $s>0$
$$m_{\cl G}(\{  |\chi| >s\})= 2 m_{\cl G}(\{  \chi >s\})\le  (2/d!) \sum\nl_{j>s}  X_j\le  2 \sum\nl_{j>s}   1/ j! .$$
We claim that there are positive constants (independent of $d$)
$c_4,c_5$ and $ c_6>e$ such that 
$$\forall s>c_6\quad m_{\cl G}(\{  |\chi| >s\})\le c_4 \exp{ -(c_5 s\log s)}.$$
Indeed, this is easy to derive from \eqref{pe13}.
Now since $m_{\cl G}(\{  |\chi| >s\})=0$ for all $s>d$
we may restrict consideration to $e<s<d$ for which
$s\log s = s^2 (\log s/s) \ge s^2 (\log d/d)  $, and we have
automatically
$$\forall s>c_6\quad m_{\cl G}(\{  |\chi| >s\})\le c_4 \exp{ -(c_5 s^2 (\log d/d))}.$$
Setting $F=\chi (d/ \log d)^{-1/2}$, we find
$$\forall s>c_6\quad m_{\cl G}(\{  |F| >s\})\le c_4 \exp{ -(c_5 s^2  )},$$
from which it is easy to deduce that,
for some $c_7$, we have $\|F\|_{\psi_2} \le c_7$, or equivalently
$$\|\chi\|_{\psi_2} \le c_7\sqrt{d/ \log d},$$ i.e. the case $G=\cl G$
shows that the growth when $d\to\infty$ of the constant in \eqref{pe7} is   optimal.
 \end{rem}   
     When the representations $\pi_n$ are defined on a single compact group
     $G$ (so that $G_n=G$ for all $n$), the next result was proved
     in \cite{Ce} for $G$ a   Lie group
     and in \cite{Hut} for $G$ a profinite group.
     \begin{cor}\label{ceb} If the groups $(G_n)$ appearing in Theorem \ref{t4} 
     are finite (or amenable as discrete groups) then the equivalent properties
     in Theorem \ref{t4}  can hold only if the dimensions $d_n=\dim(\pi_n)$ remain bounded.
     \end{cor}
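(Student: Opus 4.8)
The plan is to read the corollary off directly from Theorem \ref{t10}; no new idea is needed. Assume the equivalent properties of Theorem \ref{t4} hold for the sequence $(G_n,\pi_n)$ with every $G_n$ finite, or amenable \emph{as a discrete group}. First I would invoke property (iii): there is a constant $C$, independent of $n$, with
$$d_n\le C \int_{U(d_n)} \sup\nl_{g\in G_n} | \tr ({u}  \pi_n(g)) | m_{U(d_n)}(d{u})$$
for all $n$. For the finitely many $n$ with $d_n\le 1$ there is nothing to do, so assume $d_n>1$ and apply the estimate \eqref{pe7-} of Theorem \ref{t10}, which holds precisely because $G_n$ is finite or amenable (via Proposition \ref{tb} with Collins' bound $f(d)=(d+1)!$, fed into Lemmas \ref{el1} and \ref{tb2}). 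The integral is then $\le c_1\sqrt{d_n\log d_n}$, so $d_n\le Cc_1\sqrt{d_n\log d_n}$, i.e. $d_n\le (Cc_1)^2\log d_n$. Since $d/\log d\to\infty$, this can hold only for $d_n$ ranging over a bounded set, which is the assertion.

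An equivalent route, which I would mention for symmetry, goes through property (ii): by the definition of the Orlicz norm in \eqref{ep66}, (ii) asserts exactly that $\|\chi_{\pi_n}\|_{\psi_2}\le C$ for all $n$, while \eqref{pe7} of Theorem \ref{t10} gives $\|\chi_{\pi_n}\|_{\psi_2}\ge c_2\sqrt{d_n/\log d_n}$ once $d_n>1$; hence $d_n/\log d_n\le (C/c_2)^2$ and again $d_n$ is bounded. Both versions rest on the same mechanism: for finite or amenable $G$, Jordan's theorem forces a very large Abelian subgroup, which caps the metric entropy of $\pi(G)$, and the logarithmic size of that cap is strictly smaller than the linear growth in $d$ demanded by the Sidon-type properties of Theorem \ref{t4}.

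The hard part is already done — it is Theorem \ref{t10} itself, resting on Proposition \ref{tb} (the extension of Jordan's theorem to amenable discrete groups) and on the entropy and concentration estimates of Lemmas \ref{el1}, \ref{tb2} and \ref{lp2}. The only point requiring a word of care in writing up the corollary is that amenability must be understood \emph{as a discrete group}, so that the Tits alternative and Proposition \ref{tb} are available; once the trivial case $d_n\le 1$ is set aside, the logarithmic gap closes the argument in one line.
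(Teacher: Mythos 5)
Your proposal is correct and follows exactly the route the paper intends: the paper gives no separate proof of Corollary~\ref{ceb}, but the introduction derives it precisely by confronting properties (ii) and (iii) of Theorem~\ref{t4} with the two bounds \eqref{pe7-} and \eqref{pe7} of Theorem~\ref{t10}, as you do. The only cosmetic slip is the phrase ``finitely many $n$ with $d_n\le 1$'' (there could be infinitely many such $n$, though this is harmless since those dimensions are trivially bounded).
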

     \begin{rem}\label{bf} The proof of the bound $d+1!$ in \cite{collins} uses the classification of finite simple groups. However,
     all that is needed for the last Corollary is a bound of
     the index that is $\exp{o(d^2)}$. Such a bound,  a much easier one,
     of the order $d^{c(d/log d)^2}=\exp{(cd^2/log d)}$ is known.
     It is due to   Blichfeldt, as indicated
      in 
      \cite[p. 103]{Bli} and \cite[p. 177]{Do}. 
Blichfeldt's bound  
      improved previous ones due to himself, then
      Bieberbach and Frobenius \cite{Fro}.       
        See  
        \cite[\S 36]{CuRe} for more on the subject.
    See \cite{BreJ} for a discussion of Jordan's ideas, and \cite{BreG} for more recent related results.
        \end{rem}
          Let $f_s(d)$ be the best possible $f(d)$ if one restricts to \emph{solvable} 
         finite subgroups  $G\subset U(d)$.
          In \cite[p. 218]{Do}  L. Dornhoff proves that   $f_s(d)=\exp {O(d)}$ and that this is optimal.
          Thus, for such groups $G$, we obtain a better bound:
          \begin{cor}
          There is a numerical constant $C>0$ such that for any $d$ and  any {solvable} 
         finite subgroup  $G\subset U(d)$ we have
         $$    \EE\sup\nl_{x\in G}|\tr(x g_d  )|\le  C \sqrt{d}.$$
          \end{cor}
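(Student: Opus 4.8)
The plan is to repeat the argument of Theorem~\ref{t10} essentially verbatim, substituting Dornhoff's bound $f_s(d)=\exp O(d)$ for Collins' bound $(d+1)!$. Let $G\subset U(d)$ be a solvable finite subgroup and let $\pi\colon G\hookrightarrow U(d)$ be the tautological inclusion, so that $\pi(x)=x$ and $\tr(xg_d)=\tr(g_d\pi(x))$ for every $x\in G$. By Dornhoff's theorem \cite[p.~218]{Do} there is an absolute constant $c_0>0$ such that $G$ contains an Abelian subgroup $\Gamma$ of index $k:=[G:\Gamma]\le f_s(d)\le e^{c_0 d}$. (Dornhoff's result even gives a normal such $\Gamma$, but normality plays no role here.)

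Then I would apply Lemma~\ref{tb2} to the triple $(G,\Gamma,\pi)$. Since $\log k\le c_0 d$, inequality \eqref{e5} gives
$$\EE\sup\nl_{x\in G}|\tr(xg_d)|=\EE\sup\nl_{t\in G}|\tr(g_d\pi(t))|\le\sqrt{2\log k}+\sqrt d\le\sqrt{2c_0\,d}+\sqrt d=(1+\sqrt{2c_0})\sqrt d,$$
which is the assertion with $C=1+\sqrt{2c_0}$. Alternatively, one could bypass Lemma~\ref{tb2} entirely: feeding the covering bound $N(\pi,\vp)\le k(4\pi/\vp)^d$, which follows from Lemma~\ref{el1}, into the Dudley upper estimate in \eqref{pe3} gives $\E Z_\pi\le b_2{\cl I}(\pi)\le b_2\big(2\sqrt{\log k}+c\sqrt d\,\big)$ for an absolute constant $c$ (the integral $\int_0^2(\log(4\pi/\vp))^{1/2}d\vp$ being finite), and since $Z_\pi=\sup_{x\in G}|\tr(g_d x)|$ this again produces the bound $O(\sqrt d)$.

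I do not expect a genuine obstacle: the whole content is the linear-in-$d$ Jordan-type bound $f_s(d)=\exp O(d)$ for solvable finite subgroups of $U(d)$, which is exactly what makes the entropy term $\sqrt{2\log k}$ in \eqref{e5} of the same order $\sqrt d$ as the unavoidable $\sqrt d$ term. The exponent $1/2$ of $d$ cannot be improved, already for finite \emph{Abelian} $G$: taking $G=\{-1,1\}^d$ realized as the diagonal sign matrices, one has $\sup_{x\in G}|\tr(g_d x)|\ge\sum_{i=1}^d|\Re(g_d(i,i))|$, whose expectation is $\asymp\sqrt d$ since each $\Re(g_d(i,i))$ is a real Gaussian of variance $\asymp 1/d$; hence $\EE\sup_{x\in G}|\tr(g_d x)|\asymp\sqrt d$.
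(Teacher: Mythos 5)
Your proposal is correct and follows exactly the route the paper intends: the corollary is stated immediately after Dornhoff's bound $f_s(d)=\exp O(d)$ precisely so that one plugs $k\le e^{c_0 d}$ into the estimate $\sqrt{2\log k}+\sqrt d$ of Lemma \ref{tb2}, as in the proof of Theorem \ref{t10}. Your alternative via the entropy integral and the optimality remark are consistent with the paper's surrounding discussion but add nothing essentially different.
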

          \begin{rem} The preceding bound is essentially optimal
          since if $G$ is the diagonal (finite Abelian) subgroup of $U(d)$
          with entries $=\pm1$
          we have clearly $    \EE\sup\nl_{x\in G}|\tr(x g_d  )|\approx  \sqrt{d}.$
              \end{rem}
       
       \section{Sidon sets}\label{sid}
Let $G$ be a compact group.  
\begin{dfn} A subset $\Lambda \subset \hat G$ is said to be a Sidon set if there is a constant $C$ such that
for any family $(a_\pi)$ with $a_\pi \in M_{d_\pi}$ and $\pi \mapsto a_\pi $
finitely supported, we have
\begin{equation}\label{se0}
\sum   \tr |a_\pi| \le C \sup\nl_{t\in G}| \sum   \tr(a_\pi \pi(t))|.\end{equation}
The smallest such $C$ is called the Sidon constant of $\Lambda$.\\
Let $\bb G=\prod\nl_{\pi\in \hat G} U(d_\pi) $.
We say that $\Lambda \subset \hat G$ is  randomly Sidon if
there is a constant $C$ such that
for any family $(a_\pi)$  as before we have
\begin{equation}\label{se00}
\sum   \tr |a_\pi| \le C \int_{\bb G} \sup\nl_{t\in G}| \sum   \tr(a_\pi u_\pi \pi(t))| m_{\bb G}(du).\end{equation}
The set $\Lambda$ is called local Sidon (resp. local randomly Sidon)
if \eqref{se0}  (resp. \eqref{se00}) only holds 
for all $(a_\pi)$ with at most a single non zero term.
(Note that these local variants are trivial in the commutative case.)
\end{dfn}
See \cite{FT} and \cite{HR} for early results on random Fourier series and
lacunary sets. See \cite{GH}
for a   more recent account on Sidon sets.

Obviously Sidon implies randomly Sidon. The converse was announced
by Rider in \cite{Ri} and proved there in the 
commutative case, but the first (and apparently only) published proof 
for the non-commutative case appeared only recently in \cite{Piri}. It follows
automatically that  local Sidon and  local randomly  Sidon are also equivalent properties.

Fix $1<p<\infty$.
The set $\Lambda \subset \hat G$ is called a $\Lambda(p)$-set
if there is a constant $C$ such that
for any family $(a_\pi)$  as before 
the function $F(t)= \sum  \tr(a_\pi \pi(t) )$
satisfies
\begin{equation}\label{se1}\| F \|_{p} \le C\| F \|_{1}.\end{equation}
When $p>1$ we can replace $\| F \|_{1}$ by $\| F \|_{2}$ in this definition.
See \cite{Bo2} for more  on $\Lambda(p)$-sets.

Using more recent terminology and recalling \eqref{ep66}, let us say   for short that 
$\Lambda \subset \hat G$ is subgaussian if there is a constant $C$ such that
  any $F$  as before 
 satisfies 
\begin{equation}\label{se2}\| F \|_{\psi_2} \le C\| F \|_{2}.\end{equation}
Using \eqref{se3}  this can be related to $\Lambda(p)$-sets.
The set $\Lambda$ is called  local $\Lambda(p)$  (resp. local  subgaussian)
if,  for some $C$, \eqref{se1} 
(resp. \eqref{se2}) holds for all $F$ of the form $F=\tr(a_\pi \pi(t) )$
with $\pi\in \Lambda$.\\
The adjective ``central" is added to  any one of the preceding
definitions to designate the property obtained
by restricting it to families $(a_\pi)$
 formed of scalar multiples of the identity (see \cite{Park}).
 
It was proved by the second author 
that subgaussian  implies Sidon. Since
the converse was already known (due to Rudin \cite{Ru} in the commutative case
and to   Fig\`a-Talamanca and  Rider \cite{FTR1} in the non-commutative one),
  Sidon and subgaussian are equivalent, and similarly for the local properties.
  See \cite{Pi2,Pi5, Piri} for more on this.
  
  Note that (i) in Theorem \ref{t4}
  means equivalently that the set formed of the coordinates 
  on $\prod\nl_{n\ge 1} G_n$ is a local Sidon set, while (ii)
  means that it is  a central local subgaussian set,
  and   (iii)
  means that it is  a central  local randomly Sidon set. Actually using 
  an averaging argument based on the irreducibilty of the $\pi$'s, it is rather easy to show that a
  central  local randomly Sidon set is local randomly Sidon. 
 
   In the non-commutative case, these notions took a serious stepback
   when it was discovered that for   most classical
  compact groups $G$  there are \emph{no} infinite 
  subsets $\Lambda \subset \hat G$  satisfying them except in the case
when the dimensions $(d_\pi)_{\pi \in \Lambda}$
are bounded. More precisely, Cecchini \cite{Ce}
proved that there are no infinite $\Lambda(4)$-sets
in $\hat G$ with unbounded  dimensions if $G$ is a compact Lie group.
Giulini and Travaglini \cite{GT} improving results
due to Price and Rider
 proved that for any compact connected semisimple Lie
group $G$ there are no infinite local $\Lambda_p$ sets for $p>1$.
Related results appear in \cite{Park,Ri3,Ri4,Doo,Ha}.
Cartwright and McMullen \cite{CaMc} 
characterized the compact connected groups
that admit an infinite local Sidon set,
and proved that they contain
an infinite Sidon set. 
Hutchinson \cite{Hut} proved that there are no
infinite central local subgaussian sets with unbounded  dimensions for $G$ profinite. 

Following the recent paper \cite{BoLe}
the second author investigated what remains of Theorem \ref{t4}
when one replaces $t\mapsto \pi_n(t)$
by a matrix valued function $t\mapsto \varphi_n(t)$ 
on an arbitrary probability space $(T,m)$
satisfying the same moment conditions
as $t\mapsto \pi_n(t)$, namely the following:
\begin{equation}\label{31}
 \exists C' \ \forall n\quad \|\varphi_n\|_{L_\infty(M_{d_n})} \le C'
 \end{equation}   
\begin{equation}\label{31'}\forall i,j,k,l\quad \int \varphi_n(i,j) \ovl{\varphi_{n}(k,\ell)}dm=d_n^{-1}  \delta_{i,k}\delta_{j,\ell}.\end{equation}
In other words, $\{d_n^{1/2}\varphi_n(i,j)\mid   1\le i,j\le d_n\}$
 is an orthonormal system for each $n$.
The analogue of the local subgaussian condition
is then:
\begin{equation}\label{32}
 \exists C''\  \forall n\ge 1\ \forall a\in M_{d_n}\quad
 \|  \tr (a \varphi _n)\|_{\psi_2}\le C''\|    \tr (a \varphi_n)\|_{2}=C'' (    {d_n}^{-1}\tr |a|^2)^{1/2}. 
\end{equation}
Under these conditions, there is a constant $C$ (depending only
on $C',C''$) such that 
\begin{equation}\label{33}\forall n\ge 1\ \forall a\in M_{d_n}\quad
 \tr |a| \le C \sup\nl_{t_1,t_2\in T}|     \tr(a \varphi_n(t_1)\varphi_n(t_2))|.\end{equation}
This generalizes the implication
local subgaussian $\Rightarrow$ local Sidon mentioned above for representations.
 This is proved in \cite[Remark 3.14]{Pi3}. 
 Obviously, 
 in this general setting there is no obstruction
preventing $\varphi$ from having a \emph{finite range}. Nevertheless,
if 
the range of $\varphi$
is in some sense close to a group
it is natural
to expect that an analogue of
 Corollary \ref{ceb} holds.
 For instance, fix $\vp>0$ and   $\chi\ge 1$. Assume that
  there is a  subgroup $G_n\subset GL(d_n)$, amenable as a discrete group,
 such that
\begin{equation}\label{34}\forall n\ge 1\ 
\sup\nl_{u\in G_n} \|u\|\le \chi  \text{  and  } 
\forall t\in T \quad \exists u \in G_n\text{ such that } 
\| \varphi_n(t)- u\|\le \vp .\end{equation}
Then, here is
 one possible generalization of
 Corollary \ref{ceb}:
 \begin{cor} Assume \eqref{31} \eqref{33} and \eqref{34}.
If $\vp< (C(C' +\chi))^{-1}$, then  $\sup\nl_n d_n<\infty$.
 \end{cor}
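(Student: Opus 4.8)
\emph{Proof plan.} The strategy is to extract from \eqref{33} and \eqref{34} a genuine Sidon inequality for a bounded (linear) representation of the amenable group $G_n$, to replace that representation by a \emph{unitary} one at a cost controlled by $\chi$, and then to confront the resulting lower bound on a random‑unitary average with the upper bound supplied by Theorem~\ref{t10}; the two estimates are compatible only when $d_n$ stays bounded. Fix $n$ and write $d=d_n$, $\varphi=\varphi_n$, $G=G_n$. The first step absorbs the perturbation. Given $t_1,t_2\in T$, choose $u_1,u_2\in G$ with $\|\varphi(t_i)-u_i\|\le\vp$ as in \eqref{34}. From $\varphi(t_1)\varphi(t_2)-u_1u_2=\varphi(t_1)\big(\varphi(t_2)-u_2\big)+\big(\varphi(t_1)-u_1\big)u_2$ and \eqref{31} one gets $\|\varphi(t_1)\varphi(t_2)-u_1u_2\|\le(C'+\chi)\vp$, while $u_1u_2\in G$. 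Hence, using $|\tr(aw)|\le\|w\|\,\tr|a|$, for every $a\in M_d$,
$$|\tr(a\,\varphi(t_1)\varphi(t_2))|\le\sup\nolimits_{u\in G}|\tr(au)|+(C'+\chi)\vp\,\tr|a|,$$
so \eqref{33} gives $\tr|a|\le C\sup\nolimits_{u\in G}|\tr(au)|+C(C'+\chi)\vp\,\tr|a|$. Since $\vp<(C(C'+\chi))^{-1}$ the last term is absorbed on the left, yielding, for all $a\in M_d$,
$$\tr|a|\le C_0\,\sup\nolimits_{u\in G}|\tr(au)|,\qquad C_0:=\frac{C}{1-C(C'+\chi)\vp}<\infty.$$

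The second step turns this into a statement about a \emph{unitary} representation. Being a subgroup of $GL(d)$ with $\|u\|\le\chi$ and $\|u^{-1}\|\le\chi$ for all $u\in G$, the group $G$ has compact closure $\overline G$ inside $GL(d)$. Averaging the Euclidean inner product over the Haar measure of $\overline G$, put $P=\int_{\overline G}u^*u\,dm_{\overline G}(u)$, so that $\chi^{-2}I\le P\le\chi^2I$, and set $V=P^{1/2}$, which satisfies $\|V\|,\|V^{-1}\|\le\chi$. Then $\pi(u):=VuV^{-1}$ defines a homomorphism $\pi\colon G\to U(d)$ (one checks $\pi(u)^*\pi(u)=V^{-1}(u^*Pu)V^{-1}=V^{-1}PV^{-1}=I$, using translation invariance of $m_{\overline G}$). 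For $a\in M_d$ set $b=VaV^{-1}$; this is a bijection of $M_d$, with $\tr(au)=\tr(b\,\pi(u))$ and $\tr|b|\le\|V\|\,\|V^{-1}\|\,\tr|a|\le\chi^2\tr|a|$, so the inequality just displayed becomes
$$\tr|b|\le\chi^2C_0\,\sup\nolimits_{u\in G}|\tr(b\,\pi(u))|\qquad\text{for all }b\in M_d.$$
Thus $\pi$ satisfies the inequality of Theorem~\ref{t4}(i) with constant $\chi^2C_0$; in particular $\pi$ is irreducible by Remark~\ref{pr50}, and $C_1(\pi)\le\chi^2C_0$.

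The last step compares two estimates of $I_n:=\int_{U(d)}\sup\nolimits_{t\in G}|\tr(u\,\pi(t))|\,m_{U(d)}(du)$. On the one hand $C_3(\pi)\le C_1(\pi)\le\chi^2C_0$, and since $C_3(\pi)=d/I_n$ by definition this reads $I_n\ge d/(\chi^2C_0)$. On the other hand $G$ is amenable, so Theorem~\ref{t10} applies to $\pi\colon G\to U(d)$ (the case $d=1$ being trivial) and gives $I_n\le c_1\sqrt{d\log d}$. Combining, $d/(\chi^2C_0)\le c_1\sqrt{d\log d}$, i.e. $d\le(c_1\chi^2C_0)^2\log d$; since $x/\log x\to\infty$, this bounds $d=d_n$ by a constant depending only on $C,C',\chi,\vp$ (through $C_0$ and the numerical constant $c_1$), hence independent of $n$. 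Therefore $\sup_n d_n<\infty$.

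The one point that requires an idea rather than a computation is the reduction to a unitary representation in the second step: this is where the hypotheses ``$G_n$ amenable as a discrete group'' and ``uniformly bounded in norm'' are really used — via the compactness of $\overline{G_n}$ — and the crucial fact is that the conjugating matrix $V$ and its inverse have norm $\le\chi$, so that passing from the bounded representation $u\mapsto u$ to the unitary $\pi$ deteriorates the Sidon constant only by the fixed factor $\chi^2$. Everything else is a routine combination of \eqref{33}, \eqref{34} and Theorem~\ref{t10}.
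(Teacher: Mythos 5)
Your proof is correct and follows essentially the same route as the paper: absorb the $(C'+\chi)\vp\,\tr|a|$ perturbation using $\vp<(C(C'+\chi))^{-1}$ to get a Sidon-type inequality for $G_n$, conjugate $G_n$ into $U(d_n)$ by a $v$ with $\|v\|\,\|v^{-1}\|\le\chi^2$, and conclude that the dimensions are bounded. The only (harmless) differences are that you carry out the unitarization explicitly via Haar measure on the compact closure $\overline{G_n}$ where the paper invokes the standard averaging argument for amenable bounded groups, and that you unwind the final step quantitatively through Theorem~\ref{t10} instead of citing Corollary~\ref{ceb}.
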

  \begin{proof}
   Let $\d=C^{-1}-( C' +\chi) \vp$. By our assumption   $\d>0$. We first claim that
 $$\forall n\ge 1\ \forall a\in M_{d_n}\quad \d\tr|a|\le \sup\nl_{u\in G_n} 
 |\tr(u a)|.$$
  Indeed, by \eqref{31} and \eqref{34} for any $t_1,t_2\in T$ 
  $$|     \tr(a \varphi_n(t_1)\varphi_n(t_2))|
  \le |     \tr(a (\varphi_n(t_1)-u_1)\varphi_n(t_2))|
  +|\tr(a u_1(\varphi_n(t_2)-u_2))| +|\tr(a u_1 u_2)|
   $$
  and hence
$$ |     \tr(a \varphi_n(t_1)\varphi_n(t_2))|\le (C' +\chi) \vp \tr|a|+ \sup\nl_{u_1,u_2\in G_n} 
 |\tr(u_1u_2 a)|=  (C' +\chi) \vp \tr|a|+ \sup\nl_{u \in G_n} 
 |\tr(u  a)|.$$
 From this and \eqref{33} the claim is immediate. By a well known averaging argument,
 since $G_n$ is amenable, there is $v\in GL(d_n)$
 with $\|v\| \|v^{-1}\| \le \chi^2$ such
 that $G_n'=v G_n v^{-1}\subset U(d_n)$. Then our claim implies
 $$ \d\tr| v^{-1} a v|\le \sup\nl_{u\in G'_n} 
 |\tr(u a)|,$$ and hence
   $ \d \chi^{-2} \tr|a| \le \d \tr| v^{-1} a v|\le \sup\nl_{u\in G'_n} 
 |\tr(u a)|.$ Thus we conclude by Corollary \ref{ceb} and Remark \ref{pr50}.
  \end{proof}

\begin{rem}  In a paper in preparation we plan to
give a characterization of the 
sequences of irreducible unitary representations $\pi_n:\ G_n \to U(d_n)$
for which Theorem \ref{t4} holds. 
We will give a structural description of
the irreducible compact subgroups 
of $U(d)$ for which the inclusion
$\pi:\ G\to U(d)$ has   a bounded constant $C_2(\pi)$  (with the notation
in Lemma \ref{fl1}).

\end{rem}

 \end{document}